\title[Sofic groups]{Free products of sofic groups with amalgamation over monotileably amenable groups}
\author[Collins, Dykema]{Beno\^\i{}t Collins$^{\dagger}$}
\address{B.\ Collins \\
Department of Mathematics and Statistics \\
University of Ottawa \\
585 King Edward \\
Ottawa, ON K1N 6N5 Canada \\
and \\
CNRS \\
Department of Mathematics \\
Lyon 1 Claude Bernard University, France} 
\email{bcollins@uottawa.ca}
\thanks{\footnotesize $^{\dagger}$Research supported in part by NSERC
Discovery grant RGPIN/341303-2007 and the ANR GranMa and Galoisint grants.}
\author{Kenneth J.\ Dykema$^{*}$}
\address{K.\ Dykema \\
Department of Mathematics \\
Texas A\&M University \\
College Station, TX 77843-3368, USA}
\email{kdykema@math.tamu.edu}
\thanks{\footnotesize $^{*}$Research supported in part by NSF grant DMS-0901220}
\subjclass[2000]{20F65, 46L54 (20E06)}
\keywords{sofic groups, asymptotic freeness, permutation matrices}
\theoremstyle{plain}
\newtheorem{thm}{Theorem}[section]
\newtheorem{cor}[thm]{Corollary} 
\newtheorem{lemma}[thm]{Lemma} 
\newtheorem{prop}[thm]{Proposition}
\theoremstyle{remark}
\newtheorem{remark}[thm]{Remark}
\theoremstyle{definition}
\newcommand\Cpx{{\mathbf C}}
\newcommand\dist{\operatorname{dist}}
\newcommand\Eb{{\mathbf E}}
\newcommand\eps{{\epsilon}}
\newcommand\id{{\mathrm{id}}}
\newcommand\Ints{{\mathbf Z}}
\newcommand\Nats{{\mathbf N}}
\newcommand\Pb{{\mathbf P}}
\newcommand\Pc{{\mathcal{P}}}
\newcommand\simp{\overset{p}{\sim}}
\newcommand\phit{{\tilde\phi}}
\newcommand\pt{{\tilde p}}
\newcommand\restrict{{\upharpoonright}}
\newcommand\Sym{{\operatorname{Sym}}}
\newcommand\tr{{\mathrm{tr}}}
\newcommand\Tr{{\mathrm{Tr}}}
\newcommand\Vt{{\widetilde V}}
\begin{document}

\begin{abstract}
We show that free products of sofic groups with amalgamation over monotileably amenable subgroups are sofic.
Consequently, so are HNN extensions of sofic groups relative to homomorphisms of monotileably amenable subgroups.
We also show that families of independent uniformly distributed permutation matrices and certain families
of non--random permutation matrices (essentially, those
coming from quasi--actions of a sofic group)
are asymptotically $*$--free as the matrix size grows without bound.
\end{abstract}

\maketitle

\section{Introduction}

Sofic groups were introduced by M.\ Gromov~\cite{G99}
and named by B.\ Weiss~\cite{W00}.
In short, a group is sofic if it can be approximated (in a certain weak sense) by permutations.
All amenable and residually amenable groups are sofic.
Due in large part to work of Elek and Szab\'o~\cite{ES06}, the class of sofic groups is known to
be closed under taking
direct products, subgroups, inverse limits, direct limits, free products, and extensions by amenable groups.
See also~\cite{T} and~\cite{C} for recent interesting examples.
It is unknown whether all groups are sofic, though Gromov's famous paradoxical dictum
(``any statement about all countable groups is either trivial or false'') would argue against it.

Several results illustrate the utility of knowing that a given group is sofic.
Gromov~\cite{G99} proved that Gottschalk's Surjunctivity Conjecture holds for the groups now called sofic.
Elek and Szab\'o~\cite{ES04} proved that Kaplansky's Direct Finiteness Conjecture holds for sofic groups.
In~\cite{ES05} they gave a description of sofic
groups in terms of ultrapowers and proved that sofic groups are hyperlinear, which entails that their group von Neumann algebras
embed in $R^\omega$;
thus, the topic of sofic groups makes contact with Connes' Embedding Problem, which is a fundamental open
problem in the theory of von Neumann algebras.
See the survey articles~\cite{P08} and~\cite{PK} for more on hyperlinear and sofic groups.
A.\ Thom~\cite{T08} proved some interesting results about the group rings of sofic groups.
L.\ Bowen~\cite{Bo10} classified the Bernoulli shifts of a sofic group,
provided that the group is also Ornstein (e.g., if it contains an infinite amenable group as a subgroup).

\medskip

Now we recall a few basic notions and give a definition of sofic groups.
(See~\cite{ES04} for a proof that the definition in~\cite{W00}, which was for finitely generated groups, agrees with the one found below
if the group is finitely generated.)
The {\em normalized Hamming distance} $\dist(\sigma,\tau)$ between two permutations $\sigma$ and $\tau$,
both elements of the symmetric group $S_n$, is defined to be the number
of points not fixed by $\sigma^{-1}\tau$, divided by $n$.
Note that if we consider $S_n$ as acting on an $n$--dimensional complex vector space 
as permutation matrices, then this normalized
Hamming distance is equal to $1-\tr_n(\sigma^{-1}\tau)$, where $\tr_n$ is the trace on $M_n(\Cpx)$ normalized
so that the identity has trace $1$.

A group $\Gamma$ is {\em sofic} if for every finite subset $F$ of $\Gamma$
and every $\eps>0$,
there exist an integer $n\ge1$ and a map $\phi:\Gamma\to S_n$ such that
\begin{enumerate}[(i)]
\item
for every $g\in F\backslash\{e\}$, $\dist(\phi(g),\id)>1-\eps$, where $e$ is the identity element of $\Gamma$,
\item
for all $g_1,g_2\in F$, $\dist(\phi(g_1^{-1}g_2),\phi(g_1)^{-1}\phi(g_2))<\eps$.
\end{enumerate}
We will call a map $\phi$ satisfying these properties an $(F,\eps)$--{\em quasi--action} of $\Gamma$.

Since a group is sofic if and only if all of its finitely generated subgroups are sofic, it will
suffice to consider countable groups, and it will be convenient to have the 
elementary reformulation of soficity contained in the following proposition, whose proof is an easy exercise.
Given positive integers $n(k)$, we let $\bigoplus_{k=1}^\infty(S_{n(k)},\dist)$
denote the normal subgroup of $\prod_{k=1}^\infty S_{n(k)}$ consisting of all sequences $(\sigma_k)_{k=1}^\infty$
such that $\lim_{k\to\infty}\dist(\sigma_k,\id_{n(k)})=0$,
where $\id_{n(k)}$ is the identity element of the permutation group $S_{n(k)}$.
\begin{prop}\label{prop:soficlift}
Let $\Gamma$ be a countable group.
Then 
$\Gamma$ is sofic if and only if 
for some sequence of positive integers $n(k)$, there is a group homomorphism
\[
\psi:\Gamma\to\left(\prod_{k=1}^\infty S_{n(k)}\right)\bigg/\left(\bigoplus_{k=1}^\infty(S_{n(k)},\dist)\right),
\]
given by $\psi(g)=[(\psi_k(g))_{k=1}^\infty]$ for some maps $\psi_k:\Gamma\to S_{n(k)}$ so that
\[
\lim_{k\to\infty}\dist(\psi_k(g),\id_{n(k)})=1
\]
for all nontrivial elements $g$ of $\Gamma$.
\end{prop}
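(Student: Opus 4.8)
The plan is to prove both implications, the essential tool being that the normalized Hamming distance is a bi-invariant metric on each symmetric group. First I would record the elementary properties of $\dist$: it satisfies the triangle inequality, it is invariant under both left and right translation (on the left because $(\alpha\sigma)^{-1}(\alpha\tau)=\sigma^{-1}\tau$, and on the right because $(\sigma\beta)^{-1}(\tau\beta)=\beta^{-1}(\sigma^{-1}\tau)\beta$ is conjugate to $\sigma^{-1}\tau$ and hence has the same number of non-fixed points), and it satisfies $\dist(\sigma,\tau)=\dist(\sigma^{-1},\tau^{-1})$. The payoff is a clean description of the quotient group appearing in the statement, which I will denote by $Q$: two sequences $(x_k)_k$ and $(y_k)_k$ in $\prod_k S_{n(k)}$ represent the same class in $Q$ precisely when $\lim_{k\to\infty}\dist(x_k,y_k)=0$, since $x_ky_k^{-1}$ and $x_k^{-1}y_k$ fix the same number of points. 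This turns every assertion about $\psi$ being multiplicative, or about the images of nontrivial elements, into a statement about limits of Hamming distances of the component maps $\psi_k$.

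For the implication ``homomorphism $\Rightarrow$ sofic,'' I would fix a finite set $F\subseteq\Gamma$ and $\eps>0$ and produce an $(F,\eps)$--quasi--action by taking $\phi=\psi_k$ for a single large $k$. The homomorphism property of $\psi$, read through the description of $Q$ above, gives $\lim_k\dist(\psi_k(g_1^{-1}g_2),\psi_k(g_1^{-1})\psi_k(g_2))=0$ and, using that homomorphisms preserve inverses, $\lim_k\dist(\psi_k(g_1^{-1}),\psi_k(g_1)^{-1})=0$; combining these with left--invariance and the triangle inequality yields $\lim_k\dist(\psi_k(g_1^{-1}g_2),\psi_k(g_1)^{-1}\psi_k(g_2))=0$, which is condition (ii) once $k$ is large. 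Condition (i) is immediate from the hypothesis $\lim_k\dist(\psi_k(g),\id)=1$ for nontrivial $g$. Since $F$ is finite there are only finitely many such limits, so a single $k$ can be chosen to satisfy all of them simultaneously.

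For the converse I would enumerate $\Gamma$ and choose an increasing sequence of finite subsets $F_1\subseteq F_2\subseteq\cdots$ with $e\in F_1$ and $\bigcup_m F_m=\Gamma$, together with $\eps_m\to0$. Soficity provides, for each $m$, an $(F_m,\eps_m)$--quasi--action $\psi_m:\Gamma\to S_{n(m)}$, and I would simply set $\psi(g)=[(\psi_m(g))_m]$. For a nontrivial $g$, eventually $g\in F_m$, so condition (i) gives $\dist(\psi_m(g),\id)>1-\eps_m\to1$, establishing the required limit. The remaining point, and the one demanding the most care, is that $\psi$ is a homomorphism, i.e.\ that $\lim_m\dist(\psi_m(ab),\psi_m(a)\psi_m(b))=0$ for all $a,b\in\Gamma$. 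The subtlety is that the quasi--action axioms as stated only control $\psi_m(g_1^{-1}g_2)$, so I must first derive the auxiliary estimates $\dist(\psi_m(e),\id)<\eps_m$ and $\dist(\psi_m(a^{-1}),\psi_m(a)^{-1})<2\eps_m$ (the latter from condition (ii) applied to the pair $(a,e)$ together with the former); applying condition (ii) to the pair $(a^{-1},b)$ and chaining these estimates through the bi--invariance and triangle inequality then forces $\dist(\psi_m(ab),\psi_m(a)\psi_m(b))$ to $0$ once $m$ is large enough that $a,b,a^{-1}$ all lie in $F_m$. This inverse-- and identity--compatibility step is the main obstacle; once it is in place, the homomorphism property, and hence the whole equivalence, follows.
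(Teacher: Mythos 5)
Your proof is correct, and since the paper explicitly leaves this proposition as ``an easy exercise,'' your argument is precisely the standard one intended: identify classes in the quotient via vanishing Hamming distance, pass to a single large $k$ for one direction, and assemble quasi--actions along an exhaustion $F_m$ with $\eps_m\to0$ for the other, with the key step being the derived estimates $\dist(\psi_m(e),\id)<\eps_m$ and $\dist(\psi_m(a^{-1}),\psi_m(a)^{-1})<2\eps_m$ that convert condition (ii) into multiplicativity. The only blemish is a trivial mislabel: passing from $\dist(\psi_k(g_1^{-1}),\psi_k(g_1)^{-1})\to0$ to the same statement after multiplying both entries by $\psi_k(g_2)$ on the right uses \emph{right}-invariance, not left-invariance, but you established both so nothing is lost.
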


\medskip

In this paper, we prove that the class of sofic groups is closed under taking free products with amalgamation over
monotileably amenable subgroups.
Recall that a group $G$ is amenable if and only if for every finite set $K$ and every $\eps>0$, there is
a $(K,\eps)$--invariant set, namely, a finite set $F\subseteq G$ such that $|KF\backslash F|<\eps|F|$.
A {\em tile} (or monotile) for a group $G$ is a finite set $T\subseteq G$ such that
$G$ is a disjoint union of right translates of $T$.
We may chose a set $C\subseteq G$ of {\em centers}, so that the map $T\times C\to G$
given by multiplication $(t,c)\mapsto tc$ is a bijection.
Clearly, a translate of a tile is a tile, so we may assume $e\in T$.
We will say a group $G$ is {\em monotileably amenable} if for every finite set $K\subseteq G$ and every $\eps>0$,
there is a tile $T$ for $G$ that is $(K,\eps)$--invariant.
This notion was introduced (though not named with quite the same words we use here)
by B.\ Weiss in his paper~\cite{W01}, where he proved that every 
residually finite amenable group and every solvable group is monotileably amenable.
This class of groups includes, in addition to the solvable groups, all linear amenable groups and
Grigorchuk's groups~\cite{Gri.84} of intermediate growth.
It is an open problem whether all amenable groups are monotileably amenable, and this is not even known for the elementary
amenable groups.
However,
as shown by Ornstein and Weiss~\cite{OW87},
all amenable groups do admit quasitilings,
involving finite sets of quasitiles and approximations, and this circle of ideas,
as further developed by Kerr and Li~\cite{KL},
plays an important role in our proof.

All sofic groups are hyperlinear.
An application of results of~\cite{BDJ08} is that the class of hyperlinear
groups is closed under taking free products with amalgamation over amenable subgroups,
and this result inspired our effort in this paper.
The techniques of~\cite{BDJ08} do not appear adapted to prove that a group is sofic.
The proof in~\cite{BDJ08} relied on approximation of group von Neumann algebras of amenable groups
by finite dimensional algebras, which is not helpful in the context of this paper.
However, one aspect of the proof found here is reminiscent of the proof in~\cite{BDJ08}:  the use
of independent random unitaries to model freeness with amalgamation.
In~\cite{BDJ08}, the random unitaries were distributed according to Haar measure in the group of unitary matrices
that commute with a certain finite dimensional subalgebra,
whereas here we use uniformly distributed random permutation matrices.
See Remark~\ref{rem:cheap} for more about this.

To be more precise, our construction of quasi--actions of amalgamated free product groups $\Gamma_1*_H\Gamma_2$
where $H$ is monotileably amenable
goes by proving asymptotic vanishing of certain moments involving random permutation matrices.

Asymptotic freeness of indpendent matrices (of various sorts) as the matrix
size grows without bound is one of the mainstays of free probability theory, going back to seminal work~\cite{V91}
of Voiculescu,
and has been a key element in applications of free probability theory to operator algebras and elsewhere.
Asymptotic freeness of independent random permutation matrices was proved by A.\ Nica~\cite{N93}.
By combining Nica's result with our vanishing of moments result, we are able to extend 
Nica's asymptotic freeness result to the case of independent random permutation matrices {\em and}
certain sequences of non--random permutation matrices;
these are essentially sequences that arise from quasi--actions of sofic groups.

\medskip

The organization of the rest of this paper is as follows:
in Section~\ref{sec:vanish}, we prove our main technical result on asymptotic vanishing
of certain moments in random permutation matrices and certain non--random matrices;
in Section~\ref{sec:sofic}, we apply this asymptotic vanishing theorem to prove our main result,
that the class of sofic groups is closed under taking free products with amalgamation over monotileably amenable subgroups;
in Section~\ref{sec:free}, we combine the result of Section~\ref{sec:vanish}
with Nica's asymptotic freeness result and extend Nica's result to handle certain non--random permutation
matrices too.

\bigskip
\noindent{\em Acknowledgement.}  The authors thank Ion Nechita for the proof of Lemma 2.2,
which is an improvement on their first version.
They also thank Alexey Muranov for pointing out an error in an earlier version of this paper,
and Lewis Bowen and David Kerr for helpful discussions about quasitilings of amenable groups.

\section{Asymptotic vanishing of certain moments}
\label{sec:vanish}

The main result of this section (Theorem~\ref{thm:BU}) is an asymptotic vanishing of moments result
involving uniformly distributed random permutation matrices
and (sequences of) non--random permutation matrices whose traces approach zero as matrix size increases.
Actually, a broader class than permutation matrices is considered here, which is needed for applications.
The theorem is used in the next section to prove the main result of the paper.

We begin by fixing some notation and definitions.
If $Z$ is a finite set, then a {\em partition} of $Z$ is a set $p=\{X_1,\ldots,X_n\}$
of pairwise disjoint, nonempty subsets $X_j$ of $Z$ whose union is all of $Z$.
These sets $X_j$ are called the {\em blocks} of the partition, and the number of blocks of $p$ is denoted simply $|p|$.
We then have the equivalence relation $\simp$ on $Z$ defined by $z_1\simp z_2$
if and only if $z_1$ and $z_2$ belong to the same block of $p$.

If $Y\subset Z$ is a nonempty subset, then we let $p\restrict_Y$ denote the restriction of $p$ to Y, namely
\[
p\restrict_Y=\{X\cap Y\mid X\in p,\,X\cap Y\ne\emptyset\}.
\]

We let $\Pc(n)$ denote the set of all partitions of $\{1,\ldots,n\}$ and let $\le$ be the usual ordering of $\Pc(n)$
given by $r\le s$ if and only if every block of $r$ is contained in some block of $s$.
This makes $\Pc(n)$ into a lattice, and we use $\vee$ and $\wedge$ for the join and meet operations in this lattice.

If $i=(i_1,\ldots ,i_n)$ be a multi index with values in $\{1,\ldots ,d\}$ and $p\in \Pc(n)$, then we define
\begin{equation}\label{eq:deltaip}
\delta_{i,p}=\begin{cases}1,&\text{if }k\simp\ell\text{ implies }i_k=i_\ell\\0,&\text{otherwise.}\end{cases}
\end{equation}

Let $U$ be a random $d\times d$ permutation matrix that is uniformly distributed
and let us write $U=(u_{i_1,i_2})_{1\le i_1,i_2\le d}$,
keeping in mind the dependence of everything on $d$.
We let $\Tr$ denote the usual trace on complex matrix algebras
(normalized so that projections of rank $1$ have trace $1$) and $\tr_d=\frac1d\Tr:M_d(\Cpx)\to\Cpx$.

Suppose for every $j,d\in\Nats$, $B^{(d)}_j$ is a $d\times d$
matrix,
all of whose entries are $0$ and $1$, with each row and each column having at most one nonzero entry.
For example, $B^{(d)}_j$ could be permutation matrices.
We will write $B^{(d)}_j=(b^{(j,d)}_{i_1,i_2})_{1\le i_1,i_2\le d}$ and often simply
$B^{(d)}_j=B_j=(b^{(j)}_{i_1,i_2})_{1\le i_1,i_2\le d}$, keeping in mind the dependence on $d$.

\begin{thm}\label{thm:BU}
With $B_1,\ldots,B_{2n}$ and $U$ as above,
there are constants $C_n$ and $D_n$ depending only on $n$ such that,
letting
\begin{equation}\label{eq:fd}
f(d)=\max_{1\le j\le 2n}\tr_d(B_j),
\end{equation}
we have
\begin{equation}\label{eq:trBUbnd}
\int\bigg(\tr_d\big(B_1(UB_2U^*)B_3(UB_4U^*)\cdots B_{2n-1}(UB_{2n}U^*)\big)\bigg)\,dU \\
\le C_nf(d)+D_nd^{-1}.
\end{equation}
Thus,
if
$\lim_{d\to\infty}\tr_d(B^{(d)}_j)=0$
for all $j$, then we have
\[
\lim_{d\to\infty}\int\bigg(\tr_d\big(B_1(UB_2U^*)B_3(UB_4U^*)\cdots B_{2n-1}(UB_{2n}U^*)\big)\bigg)\,dU=0.
\]
\end{thm}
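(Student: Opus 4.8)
The plan is to expand the normalized trace as an explicit index sum, integrate out the random permutation via the exact moment formula for a uniform permutation matrix, and then run a power‑counting (genus‑type) estimate in which the partial‑permutation structure of the $B_j$ controls all the index sums. First I would write $U=(u_{a,b})$ and note $(U^*)_{a,b}=u_{b,a}$, so that with $\tr_d=\frac1d\Tr$ the integrand becomes, for a cyclic tuple of indices $j_0,\dots,j_{4n-1}$ (with $j_{4n}:=j_0$),
\begin{equation*}
\tr_d\big(B_1(UB_2U^*)\cdots B_{2n-1}(UB_{2n}U^*)\big)=\frac1d\sum_{j_0,\dots,j_{4n-1}}\prod_{k=1}^n(B_{2k-1})_{j_{4k-4},j_{4k-3}}\,u_{j_{4k-3},j_{4k-2}}\,(B_{2k})_{j_{4k-2},j_{4k-1}}\,u_{j_{4k},j_{4k-1}}.
\end{equation*}
Letting $\sigma\in S_d$ be the uniform random permutation with $u_{a,b}=1$ exactly when $\sigma(b)=a$, integration over $U$ replaces the product of the $2n$ entries $u_{j_{4k-3},j_{4k-2}}$ and $u_{j_{4k},j_{4k-1}}$ by its expectation. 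The key tool here, which I would state and prove as a preliminary lemma (in the spirit of the $\delta_{i,p}$ notation of~\eqref{eq:deltaip}, and presumably the role of the lemma attributed to Nechita), is the symmetric‑group moment formula: $\Eb\big[\prod_\alpha u_{a_\alpha,b_\alpha}\big]$ equals $(d-m)!/d!$ when the conditions $\sigma(b_\alpha)=a_\alpha$ are consistent, i.e. define a partial injection with exactly $m$ distinct constraints, and is $0$ otherwise.

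Next I would organize the sum according to the coincidence pattern of the $\sigma$‑arguments, namely a partition $p\in\Pc(2n)$ of the positions $\{j_{4k-2},j_{4k-1}\}_{k}$, so that $m=|p|$ is the number of distinct arguments and each pattern carries weight $(d-m)!/d!$, together with the further identifications among the image indices $\{j_{4k-3},j_{4k}\}_k$ (which may in particular set an image equal to an argument, thereby closing up a $\sigma$‑orbit). The decisive structural observation is that the surviving constraints organize the $4n$ index variables into a graph in which every vertex has degree two: each $B_{2k}$ joins the two arguments $j_{4k-2},j_{4k-1}$, each $B_{2k-1}$ joins the two images $j_{4k-4},j_{4k-3}$, and the $\sigma$‑edges join each argument to its image. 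Following $j_{4k-2}$ through its $B$‑edge, then its $\sigma$‑edge, and so on, shows that before any identification this graph is a single $4n$‑cycle alternating between $B$‑edges and $\sigma$‑edges. Because each $B_j$ is a partial permutation (at most one nonzero entry per row and per column), every $B$‑edge is a functional relation determining one of its endpoints from the other, so each pattern's contribution reduces to a weighted count governed by the component/cycle structure of the identified graph.

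Finally I would carry out the power count. A pattern with $m$ distinct arguments carries weight $(d-m)!/d!\sim d^{-m}$; reducing $m$ by identifying arguments raises this power, but every such identification forces one or more $B$‑entries onto the diagonal, and each diagonal entry $(B_j)_{a,a}$ summed over $a$ contributes a factor $\Tr(B_j)=d\,\tr_d(B_j)$. In the generic pattern, where all $2n$ arguments are distinct, the graph is a single $4n$‑cycle and there are no diagonal entries: here $m=2n$, the $2n$ functional $B$‑relations leave $2n$ free indices so the index sum is $O(d^{2n})$, and together with the weight $d^{-2n}$ and the prefactor $d^{-1}$ this contributes exactly $O(d^{-1})$, producing the $D_nd^{-1}$ term. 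The crux, and the main obstacle, is to show that this trade‑off always resolves in our favor: \emph{any} coincidence pattern whose total power of $d$ reaches order $d^0$ is forced to put at least one $B$‑entry on the diagonal, and hence carries a factor $\tr_d(B_j)\le f(d)$. This is an Euler‑characteristic/genus estimate on the cycle graph in which the alternating structure of the domain edges $B_{2k}$ and range edges $B_{2k-1}$, together with the partial‑permutation hypothesis, must be used carefully to forbid diagonal‑free terms of order $d^0$ or larger. Granting this, one bounds each diagonal trace factor by $f(d)$ (using $\tr_d(B_j)\le f(d)\le1$) and each surviving index sum by the matching power of $d$, and then sums over the finitely many coincidence patterns — their number being bounded by a function of $n$ alone — to obtain $C_nf(d)+D_nd^{-1}$. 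The concluding limit is then immediate, since $\lim_{d\to\infty}\tr_d(B^{(d)}_j)=0$ for all $j$ gives $f(d)\to0$, forcing the right‑hand side to $0$.
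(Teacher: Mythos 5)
Your setup coincides with the paper's: the same index expansion of the normalized trace, the same exact moment formula for a uniform permutation matrix (organized by the coincidence partition $r\in\Pc(2n)$ of the $\sigma$--arguments, with weight $(d-|r|)!/d!$), the same use of the partial--permutation property of the $B_j$ to count free indices one per connected component, and the same mechanism by which forced diagonal entries produce factors $\Tr(B_j)=d\,\tr_d(B_j)\le f(d)\,d$. The generic--pattern computation giving the $D_nd^{-1}$ term is also correct.

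The difficulty is that the step you label ``the crux, and the main obstacle'' and then introduce with ``Granting this'' is not a routine verification to be deferred --- it is essentially the whole mathematical content of the theorem, and you supply no argument for it. Concretely, two combinatorial inequalities are required. Writing $\eta=\{\{1,2\},\ldots,\{2n-1,2n\}\}$ and $\eta'=\{\{2n,1\},\{2,3\},\ldots,\{2n-2,2n-1\}\}$ for the two interleaved pairings of $\{1,\ldots,2n\}$ encoding the $B$--edges on the argument side and on the image side, one must show first that $|r\vee\eta|+|r\vee\eta'|\le|r|+1$ for \emph{every} $r\in\Pc(2n)$; without this, a pattern with many identifications could a priori contribute at positive order in $d$, since the trivial bounds only give $|r\vee\eta|+|r\vee\eta'|\le2|r|$, and your phrase ``each surviving index sum by the matching power of $d$'' presupposes exactly this estimate. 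Second, one must show that if no two cyclically adjacent elements of $\{1,\ldots,2n\}$ are $r$--equivalent (the diagonal--free case), then $|r\vee\eta|+|r\vee\eta'|\le|r|$, which is the precise form of your claim that order--$d^0$ terms force a diagonal $B$--entry. The paper proves the first inequality by an explicit connectivity argument on the blocks of $r$ arranged around the $2n$--cycle (extracting $m-1$ successive block merges shared between the two joins), and the second via Lemma~\ref{lem:oddevnbs}, which shows that when no $\eta$--pair lies inside a single block of $r$, every block of $r\vee\eta$ absorbs at least two blocks of $r$, whence $|r\vee\eta|\le|r|/2$ and likewise for $\eta'$. Your degree--two--graph framing is a perfectly reasonable language in which these facts could be established, but as written the proposal asserts rather than proves the decisive dichotomy, so the argument is incomplete at its central point.
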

\begin{proof}
We have
\begin{multline*}
\int\bigg(\tr_d\big(B_1(UB_2U^*)B_3(UB_4U^*)\cdots B_{2n-1}(UB_{2n}U^*)\big)\bigg)\,dU \\
=\frac1d\sum_{1\le i_1,\ldots,i_{4n}\le d}
\begin{aligned}[t]
b_{i_1,i_2}^{(1)}&b_{i_3,i_4}^{(2)}\cdots b_{i_{4n-1},i_{4n}}^{(2n)} \\
&\cdot\int u_{i_2,i_3}u_{i_5,i_4}u_{i_6,i_7}u_{i_9,i_8}\cdots u_{i_{4n-2},i_{4n-1}}u_{i_1,i_{4n}}\,dU.
\end{aligned}
\end{multline*}
Moreover, as is easily verified, for $k_1,\ldots,k_m,\ell_1,\ldots,\ell_m\in\{1,\ldots,d\}$, we have
\[
\int u_{k_1,\ell_1}u_{k_2,\ell_2}\cdots u_{k_m,\ell_m}\,dU=
\begin{cases}
\frac{(d-|r|)!}{d!},&r=s \\
0,&r\ne s,
\end{cases}
\]
where $r$ and $s$ are the partitions of $\{1,\ldots,m\}$ defined by $i\overset{r}{\sim}j$ if and only if $k_i=k_j$
and  $i\overset{s}{\sim}j$ if and only if $\ell_i=\ell_j$.
Therefore, we have
\begin{multline}\label{eq:Irsum}
\left|\int\bigg(\tr_d\big(B_1(UB_2U^*)B_3(UB_4U^*)\cdots B_{2n-1}(UB_{2n}U^*)\big)\bigg)\,dU\right| \\
\le\frac1d\sum_{r\in \Pc(2n)}\frac{(d-|r|)!}{d!}\sum_{i\in I(r)}b_{i_1,i_2}^{(1)}b_{i_3,i_4}^{(2)}\cdots b_{i_{4n-1},i_{4n}}^{(2n)}\,,
\end{multline}
where $I(r)=I(r,d)$ is the set of all $i=(i_1,\ldots,i_{4n})\in\{1,\ldots,d\}^{4n}$ such that $i_a=i_b$ whenever $a\simp b$,
where $p=p(r)\in\Pc(4n)$ is the partition that is the union of $f$ applied to $r$ and $g$ applied to $r$, where
$f,g:\{1,\ldots,2n\}\to\{1,\ldots,4n\}$ are given by
\begin{align*}
f(j)&=\begin{cases}
2j,&j\text{ odd}, \\
2j+1,&j\text{ even and }j<2n \\
1,&j=2n,
\end{cases} \\[1ex]
g(j)&=\begin{cases}
2j+1,&j\text{ odd}, \\
2j,&j\text{ even.}
\end{cases}
\end{align*}
These functions are presented in Table~\ref{tab:rps}.
\begin{table}[ht]
\caption{The functions $f$ and $g$, used to form the partition $p$ from $r$.}
\label{tab:rps}
\begin{tabular}{r|c|c|c|c|c|c|c|c|c|c|c|c|c}
$f$ maps & $2n$ & $1$ & & & $2$ & $3$ & & & $4$ & $5$ & & & $\cdots$ \\ \hline
$g$ maps &  & & $1$ & $2$ & & & $3$ & $4$ & & & $5$ & $6$ & $\cdots$  \\ \hline
to & $1$ & $2$ & $3$ & $4$ & $5$ & $6$ & $7$ & $8$ & $9$ & $10$ & $11$ & $12$ & $\cdots$
\end{tabular} \hfill

\vskip1ex
\begin{tabular}{r|c|c|c|c|c}
$f$ maps & \hspace*{7em} $\cdots$ & $2n-2$ & $2n-1$ & & \\ \hline
$g$ maps & \hspace*{7em} $\cdots$ & & & $2n-1$ & $2n$ \\ \hline
to &  \hspace*{7em} $\cdots$ & $4n-3$ & $4n-2$ & $4n-1$ & $4n$
\end{tabular}
\end{table}

An upper bound for the right--hand--side of~\eqref{eq:Irsum} when $d\ge4n$ is
\begin{equation}\label{eq:deltaipsum}
2^{2n}\sum_{r\in\Pc(2n)}d^{-|r|-1}\sum_{1\le i_1,\ldots,i_{4n}\le d}\delta_{i,p(r)}b_{i_1,i_2}^{(1)}b_{i_3,i_4}^{(2)}\cdots b_{i_{4n-1},i_{4n}}^{(2n)}\,,
\end{equation}
where $\delta_{i,p(r)}$ is as defined in~\eqref{eq:deltaip}.

We will need the following result, which is purely about partitions:
\begin{lemma}\label{lem:oddevnbs}
Let $n\ge1$ and suppose $r\in\Pc(2n)$ satisfies
$2j-1\overset{r}{\not\sim}2j$ for all $j\in\{1,\ldots,n\}$.
Let $\eta=\{\{1,2\},\{3,4\},\ldots,\{2n-1,2n\}\}\in\Pc(2n)$.
Then $|r\vee\eta|\le|r|/2$.
\end{lemma}
\begin{proof}
Each block $X$ of $r\vee\eta$ contains at least two blocks of $r$, because if $X$
were equal to a block of $r$, then it would also be a union of blocks of $\eta$, which is impossible by
the hypothesis on $r$.
This finishes the proof of Lemma~\ref{lem:oddevnbs}.
\end{proof}

The following lemma will be used to handle the right--most sum in~\eqref{eq:deltaipsum}.
\begin{lemma}\label{lem:bi}
Let $n\in\Nats$ and let $p$ be a partition of $\{1,2,\ldots,2n\}$.
Consider $(0,1)$--matrices $B_j$ having at most one nonzero entry per row and column, (as in Theorem~\ref{thm:BU}).
Let
\begin{equation}\label{eq:Spd}
S(p,d)=S(B_1,\ldots,B_n;p,d):=\sum_{1\le i_1,\ldots,i_{2n}\le d}\delta_{i,p}\;b_{i_1,i_2}^{(1)}b_{i_3,i_4}^{(2)}\cdots b_{i_{2n-1},i_{2n}}^{(n)}\,.
\end{equation}
Consider $p\vee\eta$ where $\eta=\{\{1,2\},\{3,4\},\ldots,\{2n-1,2n\}\}\in\Pc(2n)$.
Then $S(p,d)\le d^{|p\vee\eta|}$.
Moreover, if
\begin{equation}\label{eq:2kcond}
2j-1\simp2j\text{ for some }j\in\{1,\ldots,n\},
\end{equation}
then letting
$f(d)=\max_{1\le j\le n}\tr_d(B_j)$,
we have $S(p,d)\le f(d)d^{|p\vee\eta|}$.
\end{lemma}
\begin{proof}
Writting $p\vee\eta=\{X_1,\ldots,X_m\}$, we have that $p$ is the disjoint union $p_1\cup\cdots\cup p_m$, where $p_k$ is a partition
of $X_k$.
Then
\[
S(B_1,\ldots,B_n;p,d)=\prod_{k=1}^mS(B_{i(k,1)},B_{i(k,2)},\ldots,B_{i(k,\ell_k)};\pt_k,d),
\]
where $X_k=\{i(k,1),\ldots,i(k,\ell_k)\}$ for $i(k,1)<i(k,2)<\cdots<i(k,\ell_k)$
and where $\pt_k$ is the appropriate renumbering of $p_k$.
Since the condition~\eqref{eq:2kcond} holds for $p$ if and only if it holds for some $p_k$,
and since $f(d)\le1$ for all $d$,
it will suffice to prove the lemma in the case that
$p\vee\eta$ has only one block.

Suppose $p\vee\eta$ has only one block.
Fix $i_1,\ldots,i_{2n}\in\{1,\ldots,d\}$
and suppose we have
\begin{equation}\label{eq:bnon0}
\delta_{i,p}\;b_{i_1,i_2}^{(1)}b_{i_3,i_4}^{(2)}\cdots b_{i_{2n-1},i_{2n}}^{(n)}\ne0.
\end{equation}
Since each row and column of each $B_j$ has at most one nonzero entry,
for any given $k\in\{1,\ldots,d\}$ there is at most one value of $k'\in\{1,\ldots,d\}$ such that $b_{k,k'}^{(j)}\ne0$.
Since $p\vee\eta$ has only one block, for any given $i_1\in\{1,\ldots,d\}$ there is at most one choice of $i_2,\ldots,i_{2n}$
such that~\eqref{eq:bnon0} holds.
This implies $S(p,d)\le d$, as required.

Now suppose $p\vee\eta$ (still) has only one block and $2j-1\simp 2j$ for some $j\in\{1,\ldots,n\}$.
For any choice of $i_{2j}\in\{1,\ldots,d\}$, there is at most one choice of $i_1,\ldots,i_{2j-1},i_{2j+1},\ldots,i_{2n}$
such that~\eqref{eq:bnon0} holds; in this choice, we must have $i_{2j-1}=i_{2j}$, because $\delta_{i,p}\ne0$.
Therefore, 
\[
0\le S(p,d)\le\sum_{i_{2j}=1}^db_{i_{2j},i_{2j}}^{(j)}=\Tr(B_j)=\tr_d(B_j)d\le f(d)d
\]
This finishes the proof of Lemma~\ref{lem:bi}.
\end{proof}

\medskip

\noindent{\em Completion of the proof of Theorem~\ref{thm:BU}.}
Consider $p\vee\gamma$, where
\[
\gamma=\{\{1,2\},\{3,4\},\ldots,\{4n-1,4n\}\}\in\Pc(4n).
\]
Then $|p\vee\gamma|=|r\vee\eta|+|r\vee\eta'|$, where
$\eta=\{\{1,2\},\{3,4\},\ldots,\{2n-1,2n\}\}$ and
$\eta'=\{\{2n,1\},\{2,3\},\{4,5\},\ldots,\{2n-2,2n-1\}\}$ are from $\Pc(2n)$.
By Lemma~\ref{lem:bi},
we have
\[
S(B_1,\ldots,B_{2n};p,d)\le d^{|r\vee\eta|+|r\vee\eta'|}.
\]
Furthermore, 
if $2j-1\simp2j$ for some $j\in\{1,\ldots,2n\}$, then we have
\[
S(B_1,\ldots,B_{2n};p,d)\le f(d)\,d^{|r\vee\eta|+|r\vee\eta'|},
\]
where $f(d)$ as in~\eqref{eq:fd}.

Therefore, using the upper bound~\eqref{eq:deltaipsum} for
the integral in~\eqref{eq:Irsum},
in order to finish the proof of~\eqref{eq:trBUbnd}, it will suffice to prove:
for any $r\in\Pc(2n)$, we have
\begin{equation}\label{eq:rs1}
|r\vee\eta'|+|r\vee\eta|\le|r|+1
\end{equation}
while if, furthermore,
\begin{equation}\label{eq:rnopair}
j-1\overset{r}{\not\sim}j,\qquad(j\in\{1,\ldots,2n-1\})\qquad\text{ and }1\overset{r}{\not\sim}2n,
\end{equation}
then we have
\begin{equation}\label{eq:rs0}
|r\vee\eta'|+|r\vee\eta|\le|r|.
\end{equation}

Let us first show that~\eqref{eq:rs1} holds for all $r\in\Pc(2n)$.
We write $r=\{X_1\ldots,X_m\}$ for some nonempty sets $X_j$ and $m\ge1$.
If $m=1$, then~\eqref{eq:rs1} holds,
because, we have $|r\vee\eta'|\le|r|$ and $|s\vee\eta|\le|s|$.
Suppose $m\ge2$.
We claim that there are $a_1,\ldots,a_{m-1},b_1,\ldots,b_{m-1}\in\{1,\ldots,m\}$ with
\begin{alignat*}{2}
b_i&\not\in\{a_1\}\cup\{b_1,\ldots,b_{i-1}\}&&\quad(1\le i\le m-1) \\
a_i&\in\{a_1\}\cup\{b_1,\ldots,b_{i-1}\}&&\quad(2\le i\le m-1)
\end{alignat*}
and with some $j_i\in X_{a_i}$, $k_i\in X_{b_i}$ such that either $j_i\overset{\eta}{\sim}k_i$
or $j_i\overset{\eta'}{\sim}k_i$, (i.e., such that $j_i$ and $k_i$ are distance $1$ apart, modulo $2n$).
Indeed if $Y$ is any proper, nonempty subset of $\{1,\ldots,2n\}$, then 
the complement of $Y$ must contain some element that is distance $1$ from some element of $Y$.
So to prove the claim about the $a_i$ and $b_i$, we start with $a_1=1$;
taking $Y=X_{a_1}$, what we just showed implies that there is $b_1\ne1$ and $j_1\in X_{a_1}$, $k_1\in X_{b_1}$
so that either $j_1\overset{\eta}{\sim}k_1$ or $j_1\overset{\eta'}{\sim}k_1$.
If $m=2$, then we are done.
Otherwise, letting $Y=X_{a_1}\cup X_{b_1}$, what we showed implies that there are
$a_2\in\{a_1,b_1\}$ and $b_2\in\{1,\ldots,m\}\backslash\{a_1,b_1\}$
with $j_2\in X_{a_2}$ and $k_2\in X_{b_2}$ such that either
$j_1\overset{\eta}{\sim}k_1$ or $j_1\overset{\eta'}{\sim}k_1$.
If $m=3$, then we are done.
Otherwise, we continue in this manner, letting $Y=X_{a_1}\cup X_{b_1}\cup X_{b_2}$ and finding $a_3\in\{a_1,b_1,b_2\}$
and $b_3\not\in\{a_1,b_1,b_2\}$ and $j_3\in X_{a_3}$, $k_3\in X_{b_3}$ as required.
We continue until we have selected $m-1$ such pairs.
This proves the claim.

We may form each of $r\vee\eta$ and $r\vee\eta'$ from $r$ by performing identifications one at a time.
Thus, we construct successively a sequence
$r=r_0\le r_1\le\cdots\le r_\ell=r\vee\eta$ such that $|r_i|=|r_{i-1}|-1$ and $r_i$ is obtained from $r_{i-1}$
by merging two distinct blocks of $r_{i-1}$ that contain, respectively, $j$ and $k$ with $j\overset{\eta}{\sim}k$,
and we similarly construct a sequence
$r=r'_0\le r'_1\le\cdots\le r'_{\ell'}=r\vee\eta'$
from $r$ to $r\vee\eta'$, by performing identifications implied by $\overset{\eta'}{\sim}$.
The choice of $j_i\in X_{a_i}$ and $k_i\in X_{b_i}$ with $b_i\not\in\{a_1\}\cup\{b_1,\ldots,b_{i-1}\}$
found in the previous claim
shows that, by choosing the identifications 
accordingly, when forming $r\vee\eta$ and $r\vee\eta'$ from $r$, an aggregate of at least $m-1$ such identifications
is made.
Therefore, we have $2|r|-|r\vee\eta|-|r\vee\eta'|\ge m-1$.
But we have $|r|=m$,
which yields~\eqref{eq:rs1}.

Now we prove the inequality~\eqref{eq:rs0} under the additional hypothesis~\eqref{eq:rnopair}.
By applying Lemma~\ref{lem:oddevnbs} to $r$ and to a rotation of $r$, we get $|r\vee\eta|\le|r|/2$ and $|r\vee\eta'|\le|r|/2$.
This gives immediately~\eqref{eq:rs0},
and finishes the proof of Theorem~\ref{thm:BU}.
\end{proof}

\section{Sofic groups}
\label{sec:sofic}

In this section, we apply Theorem~\ref{thm:BU} to prove our main result.

\begin{lemma}\label{lem:tiledFolner}
Let $T$ be a tile for a countable amenable group $G$, with $e\in T$,
and let $C$ be a set of centers.
Let $K\subseteq G$ be a finite subset and $\eps>0$.
Then there is a $(K,\eps)$--invariant set $F$
of the form $F=TD$ for $D\subseteq C$.
\end{lemma}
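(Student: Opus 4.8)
The plan is to start from a highly invariant F\o lner set $F_0$ supplied by amenability and to \emph{round it to a union of tiles} by discarding every tile that does not sit entirely inside $F_0$. Since the translates $\{Tc:c\in C\}$ partition $G$, any $D\subseteq C$ gives $F=TD=\bigsqcup_{c\in D}Tc$ with $|F|=|T|\,|D|$, so such an $F$ automatically has the required form. Concretely I will set $K'=K\cup TT^{-1}$, fix a small $\eps'>0$ (to be pinned down at the end, depending only on $\eps$, $|T|$ and $|TT^{-1}|$), take a $(K',\eps')$--invariant set $F_0$, and define
\[
D=\{c\in C:Tc\subseteq F_0\},\qquad F=TD\subseteq F_0.
\]
Then I must bound $|KF\setminus F|$ from above and $|F|$ from below, both in terms of $\eps'|F_0|$, and finally choose $\eps'$ so the ratio drops below $\eps$.

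The crux, which I expect to be the main obstacle, is controlling the number of \emph{boundary tiles}, i.e.\ centers $c\in C$ with $Tc\cap F_0\ne\emptyset$ but $Tc\not\subseteq F_0$. For such a $c$ there are $t_1,t_2\in T$ with $t_1c\in F_0$ and $t_2c\notin F_0$; writing $s=t_2t_1^{-1}\in TT^{-1}\setminus\{e\}$, the point $t_1c$ lies in $F_0\setminus s^{-1}F_0$. Because distinct boundary centers live in disjoint tiles, the assignment $c\mapsto t_1c$ is injective into $\bigcup_{s\in TT^{-1}\setminus\{e\}}(F_0\setminus s^{-1}F_0)$, so the number of boundary tiles is at most $\sum_{s\in TT^{-1}\setminus\{e\}}|sF_0\setminus F_0|$, using $|F_0\setminus s^{-1}F_0|=|sF_0\setminus F_0|$. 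Since each $s\in TT^{-1}\subseteq K'$ gives $|sF_0\setminus F_0|\le|K'F_0\setminus F_0|<\eps'|F_0|$, the number of boundary tiles is strictly less than $|TT^{-1}|\,\eps'|F_0|$.

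With this estimate the rest is bookkeeping. Every point of $F_0$ lies in a unique tile, so $F_0\setminus F$ is covered by the boundary tiles, giving $|F_0\setminus F|\le|T|\,|TT^{-1}|\,\eps'|F_0|$ and hence $|F|\ge(1-|T||TT^{-1}|\eps')|F_0|$, which exceeds $\tfrac12|F_0|$ once $\eps'\le 1/(2|T||TT^{-1}|)$. For the numerator I use $F\subseteq F_0$ and the inclusion $KF\setminus F\subseteq(KF_0\setminus F_0)\cup(F_0\setminus F)$, so that, since $K\subseteq K'$,
\[
|KF\setminus F|\le|KF_0\setminus F_0|+|F_0\setminus F|<\bigl(1+|T||TT^{-1}|\bigr)\eps'|F_0|.
\]
Combining the two bounds yields $|KF\setminus F|\le 2\bigl(1+|T||TT^{-1}|\bigr)\eps'\,|F|$, so taking $\eps'$ small enough (depending only on $\eps$, $|T|$ and $|TT^{-1}|$) makes $F$ the desired $(K,\eps)$--invariant set of the form $TD$.
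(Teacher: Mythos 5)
Your proof is correct and is essentially the paper's argument: both take a F\o lner set invariant under $K$ and under $T\,T^{-1}$ and round it to a union of tiles, the only difference being that you keep the tiles \emph{contained in} the F\o lner set (inner approximation, estimated by counting boundary tiles) whereas the paper keeps all tiles \emph{meeting} it, giving $E\subseteq F\subseteq T\,T^{-1}E$ and a slightly shorter estimate. No gaps.
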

\begin{proof}
We may choose a finite subset $E\subseteq G$ that is $(T\,T^{-1},\frac\eps{2|K|})$--invariant
and also $(K,\frac\eps2)$--invariant.
Let $D=(T^{-1}E)\cap C$ and let $F=TD$.
Then $E\subseteq F\subseteq T\,T^{-1}E$,
and
\[
KF\backslash F\subseteq (KF\backslash KE)\cup(KE\backslash E)
\subseteq K(F\backslash E)\cup(KE\backslash E).
\]
Therefore,
\[
\frac{|KF\backslash F|}{|F|}\le\frac{|K||F\backslash E|+|KE\backslash E|}{|E|}
\le \frac{|K|\,|T\,T^{-1}E\backslash E|+|KE\backslash E|}{|E|}<\eps.
\]
\end{proof}

For a set $X$, by $\Sym(X)$ we denote the set of all permutations of $X$.
Thus, we have $S_d=\Sym(\{1,\ldots,d\})$.
For maps $\phi:G\to\Sym(X)$, when $G$ is a group,
we will frequently write $\phi_g$ instead of $\phi(g)$.

\begin{remark}\label{rem:closesets}
Let $0<\delta<1$ and let $X$ and $X'$ be nonempty finite sets.
If
$Y\subseteq X$ and $Y'\subseteq X'$ are finite subsets satisfying
$|Y|>(1-\delta)|X|$ and $|Y'|>(1-\delta)|X'|$
and if $\alpha:Y\to Y'$ is a bijection, 
then for $G$ any group and $\phi:G\to\Sym(X)$ any map, we can define $\phi':G\to\Sym(X')$
by letting $\phi'_g\circ\alpha(x)=\alpha\circ\phi_g(x)$ whenever $x\in Y\cap\phi_g^{-1}(Y)$,
which defines $\phi'_g$ on all but at most $\delta(|X'|+|X|)\le2\frac{\delta}{1-\delta}|X'|$ of the points of $X'$,
and by choosing some values for $\phi'_g$ on the other points in order to make it a permutation.
If $F$ is a finite subset of $G$ and if $\phi$ is an $(F,\eps)$--quasi--action, then it follows that $\phi'$
is an $(F,\eta)$--quasi--action of $G$ on $X'$, where $\eta=\eps+6\delta/(1-\delta)$.
\end{remark}

Lemma~4.5 of~\cite{KL} could be described as yielding quasitilings for quasi--actions of amenable groups.
The following is an application of it in the case that the group has a tile.
In effect, we tile each of the quasitiles with our fixed monotile.
\begin{lemma}\label{lem:TZ}
Let $G$ be an amenable group and suppose
$T$ is a tile of $G$, with $e\in T$.
Then for every $\delta>0$, there is $\delta'>0$ and a finite set $F\subseteq G$, with $TT^{-1}\subseteq F$, such that
if $\phi:G\to\Sym(X)$ is an $(F,\delta')$--quasi--action of $G$, then there is a set $Z$ and there
are subsets $Y\subseteq X$ and $Y'\subseteq T\times Z$ with $|Y|>(1-\delta)|X|$ and $|Y'|>(1-\delta)|T\times Z|$
and there is a bijection $\alpha:Y\to Y'$ such that
\[
\alpha\circ\phi_g\circ\alpha^{-1}(t,z)=(gt,z)
\]
whenever $(t,z)\in Y'$, $g\in G$, $gt\in T$ and $\phi_g\circ\alpha^{-1}(t,z)\in Y$.
\end{lemma}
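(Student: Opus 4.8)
The plan is to combine the Ornstein--Weiss--style quasitiling of the quasi--action furnished by Lemma~4.5 of~\cite{KL} with the fact, supplied by Lemma~\ref{lem:tiledFolner}, that $G$ admits arbitrarily invariant sets of the form $TD$ with $D\subseteq C$. In outline: I first arrange that the quasitiling shapes are themselves exactly tiled by right--translates of $T$; then I subdivide each occurrence of a quasitile inside $X$ into blocks, one for each $T$--translate it contains; finally I let $Z$ index all of these blocks over all occurrences and let $\alpha$ carry the block indexed by $z$ bijectively onto $T\times\{z\}$. The intertwining on $T\times Z$ is then nothing but the statement that left multiplication stays within a single $T$--tile exactly when $gt\in T$.

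First I would fix the invariance parameters. Given $\delta>0$, the quasitiling lemma requires a finite family of sufficiently $(K,\eps)$--invariant shapes and then covers all but a $\delta$--fraction of $X$ by $\eps$--disjoint copies of these shapes. Using Lemma~\ref{lem:tiledFolner} I would choose every shape to have the form $S=TD$ with $D\subseteq C$, so that $S=\bigsqcup_{d\in D}Td$ is an honest disjoint union of $T$--translates; this makes the later subdivision error--free. I would then read off from~\cite{KL} the finite test set it demands, enlarge it so that $TT^{-1}\subseteq F$, record the required tolerance as $\delta'$, and fix $F$ and $\delta'$ accordingly.

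Given an $(F,\delta')$--quasi--action $\phi:G\to\Sym(X)$, the quasitiling then yields a subset $W\subseteq X$ with $|W|>(1-\delta)|X|$ that, after discarding the small overlaps of the $\eps$--disjoint copies, is partitioned into disjoint tiles, each of the form $\{\phi_s(x_c):s\in S\}$ for some shape $S=TD$ and base point $x_c$, with $s\mapsto\phi_s(x_c)$ injective on $S$ and coherent in the sense that $\phi_g\phi_s(x_c)=\phi_{gs}(x_c)$ whenever $g\in TT^{-1}$, $s\in S$ and $gs\in S$. I would split each such tile into the blocks $\{\phi_{td}(x_c):t\in T\}$, one for each $d\in D$; these are disjoint copies of $T$ since the $Td$ are disjoint and $s\mapsto\phi_s(x_c)$ is injective. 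Letting $Z$ enumerate all pairs consisting of a tile together with a center $d\in D$ of its shape, I would set $\alpha(\phi_{td}(x_c))=(t,z)$, where $z$ is the index of that block, and put $Y=\bigcup(\text{blocks})$ and $Y'=\alpha(Y)\subseteq T\times Z$. A count gives $|Y|>(1-\delta)|X|$ and $|Y'|>(1-\delta)|T\times Z|$. The intertwining is immediate: if $(t,z)\in Y'$, $gt\in T$ and $\phi_g\circ\alpha^{-1}(t,z)\in Y$, then $g=(gt)t^{-1}\in TT^{-1}$ and $gs=(gt)d\in TD=S$, so coherence gives $\phi_g\phi_{td}(x_c)=\phi_{(gt)d}(x_c)=\alpha^{-1}(gt,z)$, that is, $\alpha\circ\phi_g\circ\alpha^{-1}(t,z)=(gt,z)$.

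The main obstacle I anticipate is extracting from Lemma~4.5 of~\cite{KL} precisely the coherent quasitiling data used above, and in particular guaranteeing that the near--multiplicativity $\phi_g\phi_s=\phi_{gs}$ holds \emph{pointwise} on each tile for all the finitely many relevant products (those with $g\in TT^{-1}$ and $s$ lying in a shape), rather than merely on average. This is exactly why $TT^{-1}$ must be contained in the test set $F$: the quasi--action defect has to be controlled for every left--multiplier that can move a point within a single tile. The remaining work is bookkeeping --- propagating the two kinds of exceptional sets, namely the quasitiling overlaps and the points where coherence fails, through the subdivision while retaining $|Y|>(1-\delta)|X|$ and $|Y'|>(1-\delta)|T\times Z|$ --- which is absorbed by choosing the coverage tolerance in~\cite{KL} and $\delta'$ small relative to $\delta$.
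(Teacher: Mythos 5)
Your proposal is correct and follows essentially the same route as the paper: both invoke Lemma~4.5 of~\cite{KL} to quasitile the quasi--action with shapes chosen, via Lemma~\ref{lem:tiledFolner}, to be exact disjoint unions of right translates of $T$, then subdivide each occurring quasitile into $T$--blocks indexed by a set $Z$ and define $\alpha$ blockwise, with $TT^{-1}\subseteq F$ guaranteeing the pointwise coherence needed for the intertwining. The bookkeeping of exceptional sets you describe is exactly how the paper absorbs the $\eta$--disjointness and $(1-\eta)$--covering losses (it takes $\eta=\delta/2$).
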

\begin{proof}
Let $C\subseteq G$ be a set of centers for the tile $T$.
Using Lemma~\ref{lem:tiledFolner}, for any $\ell\in\Nats$ and $\eta'>0$
we can find sets $F_1\subseteq F_2\subseteq\cdots\subseteq F_\ell$
of the form $F_k=TD_k$ for some nonempty subsets $D_k\subseteq C$, and with
$T\,T^{-1}\subseteq F_1$ and $|F_{k-1}^{-1}F_k\backslash F_k|<\eta'|F_k|$
for $k\in\{2,3,\ldots,\ell\}$.

We now apply Lemma~4.5 of~\cite{KL} with $\tau=0$ and with some $\eta>0$ to be specified later.
This lemma and its proof imply that
there exist $\ell\in\Nats$ and $\eta',\eta''>0$ such that whenever $F_1\subseteq F_2\subseteq\cdots\subseteq F_\ell$
are chosen as above, then for the finite set $F=F_\ell\cup F_\ell^{-1}\subseteq G$,
if $X$ is a finite set and if $\phi:G\to\Sym(X)$
is a map and if $B\subseteq X$ satisfies
\begin{enumerate}[(i')]
\item $|B|\ge(1-\eta'')|X|$
\item $\phi_{st}(a)=\phi_s\phi_t(a)$, $\phi_s(a)\ne\phi_{s'}(a)$ and $\phi_e(a)=a$
      for all $a\in B$ and all $s,s',t\in F$ with $s\ne s'$,
\end{enumerate}
then there exist sets $C_1,\ldots, C_\ell\subseteq X$ such that
\begin{enumerate}[(i)]
\item for all $k\in\{1,\ldots,\ell\}$ the map $F_k\ni s\mapsto\phi_s(c)$ is injective
\item the sets $\phi(F_1)C_1,\ldots,\phi(F_\ell)C_\ell$ are pairwise disjoint and
the sets
\[
(\phi(F_k)c)_{1\le k\le\ell,\,c\in C_k}
\]
are $\eta$--disjoint and $(1-\eta)$--cover $X$.
\end{enumerate}

We will choose $\delta'$ so small that $\phi:G\to\Sym(X)$ being an $(F,\delta')$--quasi--action will ensure
the existence of $B$ such that the hypotheses
(i') and (ii') hold.
Then we let $X'$ be the disjoint union $\coprod_{k=1}^\ell F_k\times C_k$.
We can find subsets $Y'\subseteq X'$ and $Y\subseteq X$ such that $|Y'|\ge(1-\eta)|X'|$
and $|Y|\ge(1-2\eta)|X|$ and a bijection $\alpha:Y\to Y'$ such that
whenever $s\in F$, $(t,c)\in Y'\cap(F_k\times C_k)$ and $st\in F_k$, we have
$\alpha\circ\phi_s\circ\alpha^{-1}(t,c)=(st,c)$.
Since $F_k=TD_k$, we have a natural identification of $X'$ with $T\times Z$, where $Z$ is the disjoint union
$\coprod_{k=1}^\ell D_k\times C_k$.
Since $T\subseteq F_1$ and $TT^{-1}\subseteq F$, by chosing $\eta=\delta/2$, we are done.
\end{proof}

\begin{thm}\label{thm:overamen}
Let $\Gamma=\Gamma_1*_H\Gamma_2$ be a free product of groups with amalgamation over
a subgroup $H$.
Assume that $\Gamma_1$ and $\Gamma_2$ are sofic and that $H$ is a monotileably amenable group.
Then $\Gamma$ is sofic.
\end{thm}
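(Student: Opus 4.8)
The plan is to verify soficity directly from the definition: for each finite $F\subseteq\Gamma$ and each $\eps>0$ I will construct an $(F,\eps)$--quasi--action of $\Gamma$, modelling the amalgam by conjugating an approximation of $\Gamma_2$ by a random permutation that commutes with the common image of $H$. Since a group is sofic as soon as its finitely generated subgroups are, I may assume $\Gamma_1,\Gamma_2$ and $H$ are countable. Every element of $F$ has a reduced form that is either an element of $H$ or an alternating word $\gamma_1\gamma_2\cdots\gamma_k$ with syllables taken alternately from $\Gamma_1\setminus H$ and $\Gamma_2\setminus H$; I collect all syllables and their inverses and products into finite sets $F_1\subseteq\Gamma_1$, $F_2\subseteq\Gamma_2$ and $F_H\subseteq H$, so that it suffices to control words built from $F_1\cup F_2$.

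First I would use that $H$ is monotileably amenable to fix a tile $T$ (with $e\in T$) that is $(K,\eta)$--invariant for a large finite $K\subseteq H$ and small $\eta>0$ to be chosen later, with a set $C$ of centres. Using soficity of $\Gamma_1$ and $\Gamma_2$, I choose $(F_i,\delta')$--quasi--actions $\phi^{(i)}\colon\Gamma_i\to\Sym(X_i)$ for small $\delta'$. Restricting each to $H$ and applying Lemma~\ref{lem:TZ}, after passing to almost--full subsets I may assume that each $\phi^{(i)}\restrict_H$ is modelled on $T\times Z_i$, with $h$ acting by $(t,z)\mapsto(ht,z)$ whenever $ht\in T$. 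The next step is to place both approximations on a single space carrying one common $H$--action: I amplify, modelling each $X_i$ on $T\times W$ with $W=Z_1\times Z_2$, and invoke Remark~\ref{rem:closesets} to transport the full actions so that both $\phi^{(1)}\restrict_H$ and $\phi^{(2)}\restrict_H$ become the single action $h\cdot(t,w)=(ht,w)$ on $T\times W$, up to a controllably small error.

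Writing $d=|T|\,|W|$, I then let $U$ be a uniformly random permutation of $W$, lifted to $\widehat U$ on $T\times W$ by $\widehat U(t,w)=(t,U(w))$; this $\widehat U$ commutes with the common $H$--action because $\widehat U$ fixes the $T$--coordinate. I define $\psi$ on the generators by $\psi(\gamma)=\phi^{(1)}_\gamma$ for $\gamma\in\Gamma_1$ and $\psi(\gamma)=\widehat U\,\phi^{(2)}_\gamma\,\widehat U^{*}$ for $\gamma\in\Gamma_2$; since $\widehat U$ commutes with the $H$--action and the two restrictions agree on $H$, the two prescriptions agree on $H$, so $\psi$ respects the amalgamation and is consistently defined. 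Approximate multiplicativity of $\psi$ across $F$ follows from that of the $\phi^{(i)}$ together with the identifications of Remark~\ref{rem:closesets}, so the crux is to show that every nontrivial $g\in F$ satisfies $\dist(\psi(g),\id)\to1$, that is, $\tr_d(\psi(g))\to0$. For $g\in H$ this is immediate, since $g$ is then nontrivial in $\Gamma_i$ and $\phi^{(i)}$ is a sofic approximation of $\Gamma_i$. For an alternating word $g=\gamma_1\gamma_2\cdots\gamma_{2n}$ with $\gamma_{2k-1}\in\Gamma_1\setminus H$ and $\gamma_{2k}\in\Gamma_2\setminus H$, the matrix $\psi(g)$ is, up to the small error from Remark~\ref{rem:closesets}, exactly a word $B_1(\widehat U B_2\widehat U^{*})B_3(\widehat U B_4\widehat U^{*})\cdots B_{2n-1}(\widehat U B_{2n}\widehat U^{*})$ with $B_{2k-1}=\phi^{(1)}_{\gamma_{2k-1}}$ and $B_{2k}=\phi^{(2)}_{\gamma_{2k}}$, which are partial permutation matrices exactly as in Theorem~\ref{thm:BU}.

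I expect the main obstacle to be that $\widehat U$ is \emph{not} uniform on $S_d$ but only the lift of a uniform permutation of $W$, so Theorem~\ref{thm:BU} does not apply verbatim; keeping $U$ uniform on all of $S_d$ would restore the hypothesis but would destroy the commutation with $H$ and hence the amalgamation, so the constraint is essential. I would resolve this by integrating out the $T$--coordinate: since $\widehat U$ acts only on $W$, averaging over $U\in S_{|W|}$ via the permutation moment formula already used in the proof of Theorem~\ref{thm:BU} rewrites $\int\tr_d(B_1(\widehat U B_2\widehat U^{*})\cdots)\,dU$ as an expression of precisely the form bounded in that theorem, now with matrix size $|W|$ and with $|W|\times|W|$ partial permutation matrices $\widetilde B_j$ obtained by compressing the $B_j$ along the tile direction. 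The verification that these $\widetilde B_j$ remain $(0,1)$--matrices with at most one nonzero entry per row and column, and that $\tr_{|W|}(\widetilde B_j)\to0$ for each syllable $\gamma_j\notin H$, is where the high $K$--invariance of $T$ is used (a point returned by $\gamma_j$ to its own tile in matching position is atypical when $T$ is invariant). Feeding this into Theorem~\ref{thm:BU} yields $\tr_d(\psi(g))\to0$, and choosing $K$ large, $\eta$ small, and $\delta'$ small enough makes the aggregate error below $\eps$, producing the required $(F,\eps)$--quasi--action; as these exist for all $F$ and $\eps$, $\Gamma$ is sofic (one may equivalently package them via Proposition~\ref{prop:soficlift}).
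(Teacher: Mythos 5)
Your proposal follows essentially the same route as the paper: monotile $H$, use Lemma~\ref{lem:TZ} to put both restricted $H$--actions in the standard form on $T\times W$, conjugate the $\Gamma_2$--approximation by $\widehat U=1\otimes U$ with $U$ uniform on $\Sym(W)$, and reduce the trace of an alternating word to Theorem~\ref{thm:BU} at matrix size $|W|$ by decomposing each $\phi^{(i)}_{\gamma_j}$ into its $|W|\times|W|$ blocks indexed by pairs of tile positions and summing over $t_1,\dots,t_{2n}\in T$. You correctly identified the one real obstacle (that $\widehat U$ is not uniform in $S_d$) and its resolution, so the architecture is sound.

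One justification is wrong, however, even though the statement it supports is true. The smallness of $\tr_{|W|}$ of the blocks $B_{\gamma_j,t,t'}$ for a syllable $\gamma_j\notin H$ does \emph{not} come from the F\o{}lner invariance of $T$: invariance of the tile says nothing about what an element outside $H$ does. A diagonal entry of $B_{\gamma_j,t,t'}$ records a point $(t',w)$ at which $\phi^{(i)}_{\gamma_j}$ agrees with $\phi^{(i)}_{t(t')^{-1}}$; since $t(t')^{-1}\in TT^{-1}\subseteq H$ while $\gamma_j\notin H$, these are images of \emph{distinct} group elements, and it is the point--separation property of the quasi--action (conditions (i) and (ii) applied to the nontrivial element $(t(t')^{-1})^{-1}\gamma_j$, available once $TT^{-1}$ and these products are placed in the finite set $F_i$) that forces the agreement set to have density at most about $2\delta'$, whence $\tr_{|W|}(B_{\gamma_j,t,t'})\le 2\delta'\,|T|$. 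Since the resulting bound carries a factor $|T|^{2n-1}$ from the sum over tile positions, you must then choose $\delta'$ small compared to $|T|^{-2n}$, which is legitimate because $T$ is fixed before $\delta'$. Two smaller points you gloss over: you still need to pass from vanishing of the \emph{expected} trace to a single deterministic $U$ (nonnegativity of the traces plus finiteness of $F$ and Markov's inequality suffice; the paper instead derandomizes by a block--diagonal construction), and in the direct $(F,\eps)$--verification you must check condition (ii) across cancellations between different reduced words, which the paper avoids entirely by building honest homomorphisms into the quotient group of Proposition~\ref{prop:soficlift} and invoking the universal property of the amalgam.
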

\begin{proof}
We may without loss of generality assume that $\Gamma_1$ and $\Gamma_2$ are countable.
Let $R_{i,1}\subseteq R_{i,2}\subseteq\cdots$ be finite subsets of $\Gamma_i$ whose union is all of $\Gamma_i$.
Let $K_p=(R_{1,p}\cup R_{2,p})\cap H$ and
let $T_p$ be a tile for $H$ such that 
\begin{equation}\label{eq:KT}
|(K_pT_p)\backslash T_p|<\frac1p|T_p|.
\end{equation}
Fix a map $\rho_p:K_p\to\Sym(T_p)$ so that $(\rho_p)_h(t)=ht$ whenever $t\in T_p$, $h\in K_p$ and $ht\in T_p$.

We fix some sequence $\delta_p$ tending to $0$, to be specified later.
Now applying Lemma~\ref{lem:TZ} in the case of $T_p\subseteq H$ and $\delta_p$,
we find finite sets $F_p\subseteq H$ with $T_pT_p^{-1}\subseteq F_p$ and we find $\delta_p'>0$
as described there.
We assume (without loss of generality) $\delta_p'<\delta_p$.
In particular, letting $\phi_{i,p}:\Gamma_i\to\Sym(X_{i,p})$ be an $(F_p\cup R_{i,p},\delta_p')$--quasi--action of $\Gamma_i$,
we find sets $Z_{i,p}$ and subsets $Y_{i,p}\subseteq X_{i,p}$ and $Y_{i,p}'\subseteq T_p\times Z_{i,p}$
with $|Y_{i,p}|>(1-\delta_p)|X_{i,p}|$ and $|Y_{i,p}'|>(1-\delta_p)|T_p\times Z_{i,p}|$ and bijections
$\alpha_{i,p}:Y_{i,p}\to Y_{i,p}'$ such that
\[
\alpha_{i,p}\circ(\phi_{i,p})_h\circ\alpha_{i,p}^{-1}(t,z)=(ht,z)
\]
whenever $(t,z)\in Y_{i,p}'$, $h\in H$, $ht\in T$ and $(\phi_{i,p})_h\circ\alpha_{i,p}^{-1}(t,z)\in Y_{i,p}$.
As described in Remark~\ref{rem:closesets}, we thus obtain
$(F_p\cup R_{i,p},\eta_p)$--quasi--actions
$\phi_{i,p}:\Gamma_i\to\Sym(T_p\times Z_{i,p})$
such that
\begin{equation}\label{eq:phi'}
(\phi_{i,p}')_h(t,z)=(ht,z)
\end{equation}
whenever $z\in Z_{i,p}$, $h\in H$ and $ht\in T_p$,
where $\eta_p=\delta_p'+6\delta_p/(1-\delta_p)$.
By amplification, if necessary, we may without loss of generality assume $Z_{1,p}=Z_{2,p}$, and we denote this set
by $Z_p$.
We may further amplify, if necessary, in order to make the cardinality of $Z_p$ as large as desired.

Thinking of elements of $\Sym(T_p\times Z_p)$ as permutation matrices and elements of $M_{|T_p|\,|Z_p|}(\Cpx)$,
making the obvious identification of this matrix algebra with $M_{|T_p|}(\Cpx)\otimes M_{|Z_p|}(\Cpx)$
and letting $(e_{t,t'})_{t,t'\in T_p}$ be the usual system of matrix units for $M_{|T_p|}(\Cpx)$,
we have for each $g\in\Gamma_i$
\begin{equation}\label{eq:Bg}
(\phi_{i,p}')_g=\sum_{t,t'\in T_p}e_{t,t'}\otimes B^{(i)}_{g,t,t'}\in M_{|T_p|}(\Cpx)\otimes M_{|Z_p|}(\Cpx),
\end{equation}
where each $B^{(i)}_{g,t,t'}$ is a $(0,1)$--matrix having at most one $1$ in each row and column.
Fixing any $t,t'\in T_p$ and letting $h=t(t')^{-1}$, from~\eqref{eq:phi'} we see that $B^{(i)}_{h,t,t'}$ is the
identity matrix.
Using that $\phi_{i,p}'$ is an $(F_p\cup R_{i,p},\eta_p)$--quasi--action and that $T_pT_p^{-1}\subseteq F_p$,
we see that for every $g\in R_{i,p}\backslash H$,
the permutation $(\phi_{i,p}')_g(\phi_{i,p}')_{t(t')^{-1}}^{-1}$ has at most $2\eta_p|T_p\times Z_p|$ fixed points;
this implies that $B^{(i)}_{g,t,t'}$ has at most $2\eta_p|T_p\times Z_p|$ diagonal entries that are equal to $1$.
In other words, for $g\in R_{i,p}\backslash H$ and all $t,t'\in T_p$, we have
\begin{equation}\label{eq:trBg}
\tr_{|Z_p|}(B^{(i)}_{g,t,t'})\le2\eta_p|T_p|.
\end{equation}

Let $U_p$ be a uniformly distributed random $|Z_p|\times |Z_p|$ permutation matrix, and let $V_p=1\otimes U_p$, taking values
in $M_{|T_p|}(\Cpx)\otimes M_{|Z_p|}(\Cpx)$.
Take $n\in\Nats$ and take
\begin{equation*}
g_j\in\begin{cases}
R_{1,p}\backslash H,&j\text{ odd} \\[1ex]
R_{2,p}\backslash H,&j\text{ even}
\end{cases}
\end{equation*}
and consider the moment
\begin{equation}\label{eq:Vmom}
\tr_{|T_p\times Z_p|}\big(\,\phi_{1,p}'(g_1)\big(V_p\phi_{2,p}'(g_2)V_p^*\big)\cdots\phi_{1,p}'(g_{2n-1})\big(V_p\phi_{2,p}'(g_{2n})V_p^*\big)\,\big),
\end{equation}
thought of as a random variable.
Writing out $V_p=1\otimes U_p$ and using~\eqref{eq:Bg}, we find that the moment~\eqref{eq:Vmom} equals the sum
\begin{equation*}
\frac1{|T_p|}\sum_{t_1,t_2,\ldots,t_{2n}\in T_p}
\tr_{|Z_p|}\begin{aligned}[t]
  \bigg(B_{g_1,t_1,t_2}^{(1)}&\big(U_pB_{g_2,t_2,t_3}^{(2)}U_p^*\big)\cdots \\
      &B_{g_{2n-1},t_{2n-1},t_{2n}}^{(1)}\big(U_pB_{g_{2n},t_{2n},t_1}^{(2)}U_p^*\big)\bigg).
      \end{aligned}
\end{equation*}
Using Theorem~\ref{thm:BU} and~\eqref{eq:trBg}, we find an upper bound for the expectation of the above sum to be
\begin{equation}\label{eq:ubnd}
|T_p|^{2n-1}\big(C_n(2\eta_p|T_p|)+\frac{D_n}{|Z_p|}\big),
\end{equation}
where $C_n$ and $D_n$ are the constants from Theorem~\ref{thm:BU}.
Since
$\eta_p\le\delta_p+6\delta_p/(1-\delta_p)$
can be made arbitrarily small
by choosing $\delta_p$ small enough, and since $|Z_p|$ can be made as large as needed, we choose $\delta_p$ and $|Z_p|$
so that for every $n$, the upper bound~\eqref{eq:ubnd} tends to zero as $p\to\infty$.

Now we modify $\phi_{i,p}'$ on $K_p$ so that they agree for $i=1,2$.
By the estimate~\eqref{eq:KT} and the formula~\eqref{eq:phi'},
letting
\[
(\phi_{i,p}'')_g=
\begin{cases}
(\rho_p)_g\times\id_{Z_p}\,,&g\in K_p \\
(\phi'_{i,p}(g))_g\,,&\text{otherwise,}
\end{cases}
\]
we see that $\phi_{i,p}''$ is an $(R_{i,p}\cup F_p,\eta_p+\frac6p)$--quasi--action of $\Gamma_i$.
Moreover, since $(\phi_{i,p}'')_g$ agrees with $(\phi_{i,p}')_g$ if $g\notin H$, the moment~\eqref{eq:Vmom}
still tends to zero as $p\to\infty$ when $\phi_{i,p}''$ replaces $\phi_{i,p}'$.
Note that $V_p$ commutes with $\phi_{i,p}''(h)$ for all $h\in K_p$.

Now we will change our random permutation matrix $V_p$ to a non--random permutation matrix, at the cost of increasing
the matrix size.
Indeed, $V_p$ takes on $|Z_p|!$ different values in $M_{|T_p|\,|Z_p|}(\Cpx)$, each with equal probability.
So define $\phit_{i,p}:\Gamma_i\to M_{|T_p|\,|Z_p|(|Z_p|!)}(\Cpx)$ by letting $\phit_{i,p}(g)$ be the block diagonal permutation matrix consisting
of $|Z_p|!$ copies of $\phi_{i,p}''(g)$ down the diagonal, and let $\Vt_p$ be the block diagonal permutation matrix 
consisting of the $|Z_p|!$ different values taken by $V_p$ repeated one after the other down the diagonal.
Now it is clear that the expectation of the trace $\tr_{|T_p|\,|Z_p|}$ applied to a word with
letters taken from $\phi_{1,p}''(\Gamma_1)$, $\phi_{2,p}''(\Gamma_2)$ and $\{V_p,V_p^*\}$
equals the trace $\tr_{|T_p|\,|Z_p|(|Z_p|!)}$ applied to the corresponding word of letters taken from
$\phit_{1,p}(\Gamma_1)$, $\phit_{2,p}(\Gamma_2)$ and $\{\Vt_p,\Vt_p^*\}$.
Upon identifying permutation matrices with permutations, we have that $\phit_{i,p}$ is an
$(R_{i,p}\cup K_p,\eta_p+\frac6p)$--quasi--action
of $\Gamma_i$ on the set $T_p\times Z_p\times\Sym(Z_p)$ and that $(\phit_{i,p})_h=(\rho_p)_h\times\id_{Z_p}\times\id_{\Sym(Z_p)}$
is independent of $i\in\{1,2\}$ and commutes with $\Vt_p$ for every $h\in K_p$.

Let $n(p)=|T_p|\,|Z_p|(|Z_p|!)$.
For $i\in\{1,2\}$ we define the maps
\[
\psi_i:\Gamma_i\to\left(\prod_{p=1}^\infty S_{n(p)}\right)\bigg/\left(\bigoplus_{p=1}^\infty(S_{n(p)},\dist)\right)
\]
by
\begin{align*}
\psi_1(g)&=\big[(\phit_{1,p}(g))_{p=1}^\infty\big] \\
\psi_2(g)&=\big[(\Vt_p\phit_{2,p}(g)\Vt_p^*)_{p=1}^\infty\big].
\end{align*}
Since the $R_{i,p}$ are increasing in $p$ and exhaust $\Gamma_i$, and since the $K_p$ are increasing in $p$ and exhaust $H$,
it follows that $\psi_1$ and $\psi_2$ are group homomorphisms that agree on $H$.
The universal property for amalgamated free products yields a group homomorphism
\[
\psi:\Gamma\to\left(\prod_{p=1}^\infty S_{n(p)}\right)\bigg/\left(\bigoplus_{p=1}^\infty(S_{n(p)},\dist)\right)
\]
that extends $\psi_1$ and $\psi_2$.
To be able to apply Proposition~\ref{prop:soficlift} to conclude that $\Gamma$ is sofic,
it remains to see that for every $g\in\Gamma\backslash\{e\}$, there are $\psi_p(g)\in S_{n(p)}$ such that
$\psi(g)=[(\psi_p(g))_{p=1}^\infty]$ and
\begin{equation}\label{eq:dist1}
\lim_{p\to\infty}\dist(\psi_p(g),\id_{n(p)})=1.
\end{equation}

For $g\in\Gamma$ a nontrivial group element, either $g\in H$ or we may write $g$ as a reduced word
$g=g_1g_2\cdots g_n$ with $g_j\in\Gamma_{i_j}\backslash H$ and $i_1\ne i_2,\,i_2\ne i_3,\ldots,i_{n-1}\ne i_n$.
\begin{enumerate}[(a)]
\item\label{enum:distcase1}
If $g\in H$ or if $n=1$ and $i_1=1$, then we may take $\psi_p(g)=\phit_{1,p}(g_1)$ and we get~\eqref{eq:dist1}
by the corresponding property for the $\phit_{1,p}$.
\item\label{enum:distcase2}
If $n=1$ and $i_1=2$, then we may take $\psi_p(g)=\Vt_p\phit_{2,p}(g_1)\Vt_p^*$ and we get~\eqref{eq:dist1}
by the corresponding property for the $\phit_{2,p}$, because $\dist$ is invariant under left and right multiplication.
\item\label{enum:distcase3}
If $n$ is even and $i_1=1$, then we may take
\begin{align*}
&\psi_p(g)= \\
&\phit_{1,p}(g_1)\Vt_p\phit_{2,p}(g_2)\Vt_p^*\phit_{1,p}(g_3)\Vt_p\phit_{2,p}(g_4)\Vt_p^*
\cdots\phit_{1,p}(g_{2n-1})\Vt_p\phit_{2,p}(g_{2n})\Vt_p^*
\end{align*}
and the asymptotic vanishing of the moment~\eqref{eq:Vmom} as $p\to\infty$ implies that~\eqref{eq:dist1} holds.
\item
In all other cases, the nontrivial element $g$ is conjugate in $\Gamma$ to an element $g'$ of the sort considered in
parts~(\ref{enum:distcase1}), (\ref{enum:distcase2}) or (\ref{enum:distcase3});
say $g=fg'f^{-1}$ for $f\in\Gamma$.
Letting $f_p\in S_{n(p)}$ be any elements so that $\psi(f)=[(f_p)_{p=1}^\infty]$, we may take
$\psi_p(g)=f_p\psi_p(g')f_p^{-1}$.
Since $\dist$ is invariant under left and right multiplication in symmetric groups,
we get~\eqref{eq:dist1} from the same property for the lift $(\psi_p(g'))_{p=1}^\infty$ of $g'$.
\end{enumerate}
\end{proof}

\begin{remark}\label{rem:cheap}
Consider the proof of Theorem~\ref{thm:overamen} in the case of $H$ a finite group.
Here, with a bit of tweaking, we may arrange that $T_p=H$ and that $(\rho_p)_h\in\Sym(H)$ is left multiplication
by $h$, for all $p$.
Now this proof is analogous in spirit to the construction found in~\cite{BDJ08}
of matricial microstates in a tracial free product $A*_DB$ of von Neumann algebras with amalgamation over 
a finite dimensional subalgebra $D$:
one starts with microstates for generators of $A$ and of $B$, one arranges that these microstates agree
on generators of $D$, and then one conjugates with a random unitary that is Haar distributed in the
group of all unitaries in the commutant of $D$.
Where the analogy breaks down, however is that in the proof of Theorem~\ref{thm:overamen},
although we do conjugate with a random permutation that commutes with the action of $H$,
we do not require it to take all values in the commutant of $H$.
Thus, we construct the quasi--actions of $\Gamma_1*_H\Gamma_2$ more cheaply than we would have expected
by analogy with the proof found in~\cite{BDJ08}.
\end{remark}

{}From Theorem~\ref{thm:overamen}, using a well known picture of the HNN extension (which, for convenience, we sketch)
and a result of Elek and Szab\'o about amenable
extensions of sofic groups, we obtain the following
result for HNN extensions of sofic groups.

\begin{cor}
If $\Gamma=G*_\theta$ is an HNN extension of a sofic group $G$ relative to an injective group homomorphism $\theta:H\to G$
where $H$ is a monotileably amenable subgroup of $G$, then $\Gamma$ is sofic.
\end{cor}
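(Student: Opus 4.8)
The plan is to exhibit $\Gamma$ as an extension of a sofic group by $\Ints$ and then invoke the closure of the sofic class under extensions by amenable groups. Write the HNN extension as $\Gamma=\langle G,t\mid tht^{-1}=\theta(h)\ (h\in H)\rangle$ and let $\pi\colon\Gamma\to\Ints$ be the homomorphism determined by $\pi(t)=1$ and $\pi(g)=0$ for $g\in G$; this is well defined since each defining relation has $t$--exponent zero on both sides. Set $N=\ker\pi$. The well--known picture of the HNN extension (Britton's normal form, or equivalently the Bass--Serre tree on which $\Gamma$ acts with a single orbit of vertices and of edges) shows that $N$ is generated by the conjugates $G_n:=t^nGt^{-n}$, $n\in\Ints$, and that $N$ is the fundamental group of the bi--infinite line of groups whose $n$--th vertex group is $G_n\cong G$ and whose edge group between $G_n$ and $G_{n+1}$ is the copy of $H$ given by $t^{n}\theta(H)t^{-n}=t^{n+1}Ht^{-n-1}$.

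Concretely, $N$ is the increasing union $N=\bigcup_{k\ge0}N_k$ of the finite iterated amalgamated free products
\[
N_k=G_{-k}*_H G_{-k+1}*_H\cdots *_H G_k,
\]
each consecutive amalgamation being taken over a copy of $H$, glued by $\theta$ on one side and by the inclusion $H\hookrightarrow G$ on the other. First I would check that each $N_k$ is sofic by induction on the number of factors: the base case $N_0=G_0\cong G$ is sofic by hypothesis, and the inductive step adjoins one further copy of $G$ to a sofic group along a subgroup isomorphic to $H$, which is sofic by Theorem~\ref{thm:overamen}, since $H$ is monotileably amenable and every copy of $H$ remains monotileably amenable. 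Because soficity is a local property (a group is sofic if and only if each of its finitely generated subgroups is sofic) and is preserved under direct limits \cite{ES06}, and since every finitely generated subgroup of $N$ lies in some $N_k$, it follows that $N$ is sofic.

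Finally, $\Gamma/N\cong\Ints$ is amenable, so $\Gamma$ is an extension of the sofic group $N$ by an amenable group; by the theorem of Elek and Szab\'o that the sofic class is closed under extensions by amenable groups \cite{ES06}, $\Gamma$ is sofic. The one point requiring care, and the main obstacle, is the structural identification of $N$ as the line of groups above, i.e.\ the verification via the normal form that the subgroup generated by the $G_n$ assembles into an iterated amalgamated free product in which \emph{every} edge group is a copy of the monotileably amenable group $H$. Once this structural fact is in hand, the iteration of Theorem~\ref{thm:overamen} and the appeal to the Elek--Szab\'o extension result are routine.
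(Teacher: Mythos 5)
Your proposal is correct and follows essentially the same route as the paper's proof: both identify the kernel of the canonical homomorphism $\Gamma\to\Ints$ as a direct limit of iterated amalgamated free products of copies of $G$ over copies of $H$, apply Theorem~\ref{thm:overamen} inductively to each finite stage, and conclude via the Elek--Szab\'o theorem on extensions of sofic groups by amenable (here infinite cyclic) quotients. The only cosmetic difference is that you justify the structural decomposition of the kernel via Bass--Serre theory where the paper cites Britton's Lemma and normal forms; both are standard and adequate.
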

\begin{proof}
The group $\Gamma$ is generated by $G$ and an extra generator $t$ with the added relations $t^{-1}ht=\theta(h)$ for all $h\in H$.
As is well known, and as can be proved using Britton's Lemma and the normal form for HNN extensions (see~\cite{LS77}),
the group $\Gamma$ is isomorphic to the crossed product group $K\rtimes_\alpha\Ints$, where $K$ is the subgroup of $\Gamma$ generated by $\bigcup_{k\in\Ints}t^{-k}Gt^k$,
by the automorphism $\alpha:x\mapsto t^{-1}xt$ of $K$.
Moreover, $K$ is a direct limit of groups that are obtained as free products with amalgamation over $H$.
For integers $p$ and $q$, let $K_{[p,q]}$ be the subgroup of $\Gamma$ generated by $\bigcup_{p\le k\le q}t^{-k}Gt^k$.
If $p\le k\le q$, let $\lambda_k:G\to K_{[p,q]}$ denote the injective $*$--homomorphism $g\mapsto t^{-k}gt^k$.
Then we have
\[
K_{[p,q+1]}\cong K_{[p,q]}*_HG,
\]
where the amalgamation is with respect to the maps $\lambda_q\circ\theta:H\to K_{[p,q]}$
and the inclusion map $H\to G$, whereas
\[
K_{[p-1,q]}\cong G*_HK_{[p,q]}\,,
\]
where the amalgamation is with respect to the maps $\lambda_q\restrict_H:H\to K_{[p,q]}$
and $\theta:H\to G$.
By repeated application of Theorem~\ref{thm:overamen}, each $K_{[p,q]}$ is sofic, so their direct limit $K$ is sofic.
Since $K$ is a normal subgroup of $\Gamma$ with infinite cyclic quotient, by Theorem~1 of~\cite{ES06},
$\Gamma$ is sofic.
\end{proof}

\section{Asymptotic freeness}
\label{sec:free}

In~\cite{N93}, A.\ Nica proved asymptotic $*$--freeness for independent random permutation matrices.
Let $I$ be a set and for each $d\in\Nats$, let $(U_i)_{i\in I}$ be an independent family of permutation matrix valued
random variables, where each $U_i=U_{i,d}$ is a uniformly distributed random $d\times d$ permutation matrix.
Let $\Eb$ denote the expectation of the underlying probability space.
Let $F_I=\langle x_i\mid i\in I\rangle$ be the free group with free generators $(x_i)_{i\in I}$
and if $w\in F_I$, let $w(U)$ denote the $d\times d$ permutation matrix obtained by replacing each $x_i$ in $w$ with $U_i$
and each $x_i^{-1}$ with $U_i^*$.
(Of course, if $w$ is the identity element of $F_I$, then $w(U)$ denotes the $d\times d$ identity matrix.)
Nica's asymptotic freeness result is that for every nontrivial $w\in F_I$, we have $\lim_{d\to\infty}\Eb(\tr_d(w(U)))=0$.

The asymptotic vanishing of moments result, Theorem~\ref{thm:BU}, is redolent of asymptotic $*$--freeness.
We will combine it with Nica's asymptotic freeness result to obtain actual
asymptotic $*$--freeness of independent random permutation matrices
and certain families of non--random permutation matrices.
Though, for convenience, our statements are in terms of sequences of $d\times d$ permutation matrices for {\em all} natural numbers $d$,
of course the analogous statements hold for $d_k\times d_k$ matrices, so long as $d_k\to\infty$ as $k\to\infty$.

We consider certain families of sequences of non--random permutation matrices;
for example, these can be taken from from quasi--actions of a group that
are sufficient to demonstrate that the group is sofic.
Let $J$ be a set and suppose for each $j\in J$ and $d\in D$, $B_j=B_{j,d}$ is a $d\times d$ (non--random) permutation matrix.
Suppose 
\[
\forall j\in J\qquad\lim_{d\to\infty}\tr_d(B_{j,d})=0
\]
and
\begin{alignat}{3}
&\forall j_1,j_2\in J
&\quad&\text{either}\quad
&&\lim_{d\to\infty}\dist(B_{j_1}B_{j_2},\id_d)=0 \label{eq:eitheror1} \\
&&\quad&\text{or}\quad
&&\exists j_3\in J\quad\dist(B_{j_1}B_{j_2},B_{j_3})=0, \label{eq:eitheror2}
\end{alignat}
where we are identifying permuation matrices with their corresponding
permutations in $S_d$.

\begin{thm}\label{thm:free}
Let $(U_i)_{i\in I}$ and $(B_j)_{j\in J}$ be as described above.
Then the family
\[
\big(\{U_i,U_i^*\}\big)_{i\in I},\;\{B_j\mid j\in J\}
\]
is asymptotically free as $d\to\infty$, meaning, that we have
\begin{equation}\label{eq:afree}
\lim_{d\to\infty}\Eb\big(\tr_d\big(w_0(U)B_{j_1}w_1(U)B_{j_2}\cdots w_{n-1}(U)B_{j_n}w_n(U)\big)\big)=0
\end{equation}
whenever $n\ge0$, $j_1,\ldots,j_n\in J$, $w_0,w_1,\ldots,w_n\in F_I$, $w_1,\ldots,w_{n-1}$ are nontrivial words and
if $n=0$ then $w_0$ is nontrivial.
\end{thm}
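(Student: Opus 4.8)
The plan is to deduce~\eqref{eq:afree} from three ingredients: Nica's result that $\lim_{d\to\infty}\Eb(\tr_d(w(U)))=0$ for every nontrivial $w\in F_I$; the asymptotic vanishing estimate of Theorem~\ref{thm:BU}; and the group--like closure~\eqref{eq:eitheror1}--\eqref{eq:eitheror2} of the family $\{B_j\}$. First I would use cyclicity of $\tr_d$ to absorb the right end word into the left one, replacing $w_n(U)$ and $w_0(U)$ by the single word $\tilde w_0=w_nw_0$ and turning the left--hand side of~\eqref{eq:afree} into the expectation of the trace of a \emph{cyclic} alternating product $\tilde w_0(U)B_{j_1}w_1(U)B_{j_2}\cdots w_{n-1}(U)B_{j_n}$. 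In this cyclic word every separator between two consecutive $B$'s is nontrivial except possibly $\tilde w_0$ at the seam.

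Next I would eliminate that one possibly trivial separator. If $\tilde w_0$ is trivial and $n\ge2$, then $B_{j_n}$ and $B_{j_1}$ become adjacent, and~\eqref{eq:eitheror1}--\eqref{eq:eitheror2} allow me to replace their product asymptotically either by the identity (deleting both and merging the surrounding words $w_{n-1}$ and $w_1$) or by a single $B_{j_3}$ (lowering $n$ by one); in either case the number of $B$'s strictly decreases while the remaining inner separators stay nontrivial. Iterating, I reach one of three irreducible situations: a pure word $\tr_d(w(U))$ with $w$ nontrivial, handled directly by Nica's theorem; a single factor $\tr_d(B_{j})$, handled by the hypothesis $\tr_d(B_{j,d})\to0$; or a genuinely alternating cyclic product in which \emph{every} separator is a nontrivial element of $F_I$. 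The first two vanish immediately, so the problem is reduced to the genuinely alternating case.

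For the genuinely alternating case I would induct on the number $s$ of distinct generators $i\in I$ that actually occur. Since all separators are nontrivial, some $U_i$ occurs; fixing all other generators and integrating over $U_i$ alone, the occurrences of $U_i$ and $U_i^*$ cut the word into chunks $B'_1,\ldots,B'_m$, each a product of $B$'s and of words in the remaining generators and hence again a $(0,1)$--matrix with at most one nonzero entry per row and column. The single--permutation estimate of Theorem~\ref{thm:BU} then bounds the $U_i$--integral by $C\,f(d)+D\,d^{-1}$ with $f(d)=\max_k\tr_d(B'_k)$. Taking the expectation over the remaining generators and using $\tr_d(B'_k)\ge0$, I bound $\Eb(f(d))\le\sum_k\Eb(\tr_d(B'_k))$; by cyclicity each $\Eb(\tr_d(B'_k))$ is an instance of~\eqref{eq:afree} involving only the $s-1$ surviving generators, so after one more round of the $B$--collapsing reduction above it vanishes by the induction hypothesis. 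Since there are boundedly many chunks and $D\,d^{-1}\to0$, the whole moment vanishes; the base case $s=0$ is exactly the single--$B$ and Nica cases already disposed of.

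I expect the main obstacle to be the bookkeeping that keeps the induction honest: verifying that cyclically closing a chunk $B'_k$ and collapsing the $B$'s it contains via~\eqref{eq:eitheror1}--\eqref{eq:eitheror2} always lands in a strictly simpler instance of~\eqref{eq:afree} and \emph{never} produces a bare identity matrix, whose trace would tend to $1$ rather than $0$. The point that makes this work is that, because the inner separators of the original word are nontrivial, within any chunk two consecutive $B$'s are always separated by at least one surviving generator letter; consequently the group collapse is only ever triggered by the single new adjacency created when a chunk is closed, and it cannot cascade. The outcome is always a single $B$ (trace $\to0$), a nontrivial pure word (trace $\to0$ by Nica), or a shorter alternating product, but never the identity. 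A secondary technical point is that Theorem~\ref{thm:BU} is stated for the conjugation pattern $B_1(UB_2U^*)\cdots$ with an even number of factors, whereas a general chunk decomposition produces arbitrary patterns of $U_i$ and $U_i^*$; the bound needed for an arbitrary single--permutation pattern follows from the same moment formula and the partition estimates of Lemmas~\ref{lem:oddevnbs} and~\ref{lem:bi} used to prove Theorem~\ref{thm:BU}.
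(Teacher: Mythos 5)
Your opening reduction --- cyclically absorbing $w_n w_0$, collapsing adjacent $B$'s via~\eqref{eq:eitheror1}--\eqref{eq:eitheror2}, and landing in one of the three cases (pure Nica word, single $B_j$, or a fully alternating product with all separators nontrivial) --- is exactly the paper's first step. But the treatment of the alternating case has a genuine gap. Your induction on the number of active generators hinges on integrating out a single $U_i$ and applying (a version of) Theorem~\ref{thm:BU} with the chunks $B_1',\ldots,B_m'$ as the fixed matrices and $f(d)=\max_k\tr_d(B_k')$. Theorem~\ref{thm:BU} as proved, however, covers only the strictly alternating conjugation pattern $B_1(UB_2U^*)B_3(UB_4U^*)\cdots$; its proof is wired to the two specific partitions $\eta,\eta'$ and the inequality $|r\vee\eta|+|r\vee\eta'|\le|r|$. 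For a general word the pattern of $U_i$ and $U_i^*$ is arbitrary, and the needed generalization is not a routine rerun of Lemmas~\ref{lem:oddevnbs} and~\ref{lem:bi}. Worse, the statement you want is simply false in the form you use it: if some $w_j$ contains $x_i^2$, the chunk between the two consecutive same--sign occurrences of $U_i$ is the identity matrix, so $f(d)=1$ and the bound $C f(d)+Dd^{-1}$ says nothing. So the key estimate of your induction is unproved and, with the stated right--hand side, unprovable. A secondary flaw: your claim that the $B$--collapse at the seam of a closed chunk ``cannot cascade'' is wrong --- after one application of~\eqref{eq:eitheror1} the two flanking separators $v_{m-1}$ and $v_1$ merge, and the product of two nontrivial reduced words can be trivial, so further collapses can occur. (One can check the cascade never terminates in a bare identity, because the innermost surviving separator is an inner one, but that argument is missing.)

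The paper avoids all of this with one move you did not find: introduce a fresh uniformly random permutation $V$ independent of the $U_i$ and replace each $U_i$ by $VU_iV^*$, which leaves the joint distribution unchanged. The word then becomes $B_{j_1}\bigl(Vw_1(U)V^*\bigr)B_{j_2}\cdots B_{j_n}\bigl(Vw_n(U)V^*\bigr)$, which in the variable $V$ is \emph{exactly} the alternating pattern of Theorem~\ref{thm:BU}, with the matrices $w_j(U)$ playing the role of the even--indexed $B$'s. Conditioning on the $U_i$, Nica's theorem guarantees that outside an event of vanishing probability all $\tr_d(w_j(U))$ are below some $\eps_d\searrow0$, so the $V$--integral is at most $C_n\max(f(d),\eps_d)+D_nd^{-1}$, and the bad event contributes at most its probability. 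No generalization of Theorem~\ref{thm:BU} and no induction on generators is needed. If you want to salvage your route, you would have to prove an arbitrary--pattern analogue of Theorem~\ref{thm:BU} whose bound degrades gracefully on identity chunks; that is essentially a new theorem, not a corollary of the existing lemmas.
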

\begin{proof}
Using the properties of the trace and the property~\eqref{eq:eitheror1}--\eqref{eq:eitheror2} of the family of the $B_j$,
we may cyclically reduce any expression of the form appearing on the left--hand--side of~\eqref{eq:afree}
and we see that it equals an expression in one of the three forms
\begin{gather}
\lim_{d\to\infty}\Eb(\tr_d(w_1(U))) \label{eq:afree1} \\
\lim_{d\to\infty}\Eb(\tr_d(B_{j_1})) \label{eq:afree2} \\
\lim_{d\to\infty}\Eb\big(\tr_d\big(B_{j_1}w_1(U)B_{j_2}\cdots w_{n-1}(U)B_{j_n}w_n(U)\big)\big) \label{eq:afree3}
\end{gather}
where $j_1,\ldots,j_n\in J$ and $w_1\ldots,w_n$ are nontrivial elements of $F_I$.
Here we use that if $C_d$ and $D_d$ are permutation matrices and if $\lim_{d\to\infty}\dist(C_d,D_d)=0$,
then for any permutation matrix $V$,
we have $\lim_{d\to\infty}\tr_d(VC_d-VD_d)=0$.

The limit in~\eqref{eq:afree1} vanishes by Nica's asymptotic freeness result.
The limit in~\eqref{eq:afree2} vanishes by hypothesis.
For the limit in~\eqref{eq:afree3}, we will use Nica's asymptotic freeness result and Theorem~\ref{thm:BU}.
Let $V$ be a uniformly distributed random permutation matrix that is independent from all the $U_i$.
Since the distribution of the family $(VU_iV^*)_{i\in I}$ is the same as for $(U_i)_{i\in I}$,
it will suffice to show
\begin{equation}\label{eq:BVU}
\lim_{d\to\infty}\Eb\big(\tr_d\big(B_{j_1}Vw_1(U)V^*B_{j_2}\cdots Vw_{n-1}(U)V^*B_{j_n}Vw_n(U)V^*\big)\big)=0.
\end{equation}
{}From Nica's asymptotic freeness result, we get for every $\eps>0$
\[
\lim_{d\to\infty}\Pb\big(\max_{1\le j\le n}\tr_d(w_j(U))\ge\eps\big)=0,
\]
where $\Pb$ means the probability of the event.
Therefore, we can find a sequence $\eps_d\searrow0$ such that $\lim_{d\to\infty}\Pb(F_d)=0$,
where $F_d$ is the event
\[
\max_{1\le j\le n}\tr_d(w_j(U))\ge\eps_d\,.
\]
Since $V$ and $(U_i)_{i\in I}$ are independent, we can evaluate the expectation in~\eqref{eq:BVU} by first, for
each fixed choice of values for $(U_i)_{i\in I}$, integrating with respect to $V$, and then integrating
with respect to the $(U_i)_{i\in I}$.
For any choice of $(U_i)_{i\in I}$, we have by a trivial bound 
\[
\int\tr_d\big(B_{j_1}Vw_1(U)V^*B_{j_2}\cdots Vw_{n-1}(U)V^*B_{j_n}Vw_n(U)V^*\big)\,dV\le1.
\]
If we choose values of $(U_i)_{i\in I}$ that lie in the complement of the event $F_d$, then by Theorem~\ref{thm:BU},
letting $f(d)=\max(\tr_d(B_{j_1}),\tr_d(B_{j_2}),\ldots,\tr_d(B_{j_n}))$,
we have
\begin{multline*}
\int\tr_d\big(B_{j_1}Vw_1(U)V^*B_{j_2}\cdots Vw_{n-1}(U)V^*B_{j_n}Vw_n(U)V^*\big)\,dV \\
\le C_n\max(f(d),\eps_d)+D_nd^{-1},
\end{multline*}
where $C_n$ and $D_n$ are the constants from Theorem~\ref{thm:BU}.
So we get the upper bound
\begin{multline*}
\Eb\big(\tr_d\big(B_{j_1}Vw_1(U)V^*B_{j_2}\cdots Vw_{n-1}(U)V^*B_{j_n}Vw_n(U)V^*\big)\big) \\
\le C_n\max(f(d),\eps_d)+D_nd^{-1}+\Pb(F_d),
\end{multline*}
which tends to $0$ as $d\to\infty$.
\end{proof}

\smallskip
\noindent
{\em Note added in proof:}
After this paper was accepted for publication, independent papers by Paunescu~\cite{P} and
Elek and Szab\'o~\cite{ES} appeared,
proving that soficity of groups is preserved under taking free products with amalgamation
over arbitrary amenable groups.
Also (in March, 2011), equation~\eqref{eq:Irsum} and surrounding description were corrected.

\begin{bibdiv}
\begin{biblist}

\bib{Bo10}{article}{
  author={Bowen, Lewis},
  title={Measure conjugacy invariants for actions of countable sofic groups},
  journal={J. Amer. Math. Soc.},
  year={2010},
  volume={23},
  pages={217--245}
}

\bib{BDJ08}{article}{
  author={Brown, Nathanial},
  author={Dykema, Ken},
  author={Jung, Kenley},
  contribution={
    type={an appendix},
    author={L\"uck, Wolfgang}
  },
  title={Free entropy dimension in amalgamated free products},
  journal={Proc. London Math. Soc.},
  volume={97},
  year={2008},
  pages={339--367}
}

\bib{C}{article}{
  author={Cornulier, Yves},
  title={A sofic group away from amenable groups},
  journal={Math. Ann.},
  status={to appear, DOI: 10.1007/s00208-010-0557-8}
}

\bib{ES04}{article}{
  author={Elek, G\'abor},
  author={Szab\'o, Endre},
  title={Sofic groups and direct finiteness},
  journal={J. Algebra},
  year={2004},
  volume={280},
  pages={426--434}
}

\bib{ES05}{article}{
  author={Elek, G\'abor},
  author={Szab\'o, Endre},
  title={Hyperlinearity, essentially free actions and $L^2$--invariants.  The sofic property},
  journal={Math. Ann.},
  volume={332},
  year={2005},
  pages={421--441}
}

\bib{ES06}{article}{
  author={Elek, G\'abor},
  author={Szab\'o, Endre},
  title={On sofic groups},
  journal={J. Group Theory},
  volume={9},
  year={2006},
  pages={161--171}
}

\bib{ES}{article}{
  author={Elek, G\'abor},
  author={Szab\'o, Endre},
  title={Sofic representations of amenable groups},
  eprint={http://arxiv.org/abs/1010.3424v1}
}

\bib{Gri.84}{article}{
  author={Grigorchuk, Rostislav},
  title={Degrees of growth of finitely generated groups and the theory of invariant means},
  language={Russian},
  journal={Izv. Akad. Nauk SSSR Ser. Mat.},
  volume={48},
  date={1984},
  pages={939--985},
  translation={
    journal={Math. USSR--Sb.},
    volume={54},
    year={1986},
    pages={185--205},
  },
}

\bib{G99}{article}{
  author={Gromov, Mikael},
  title={Endomorphisms of symbolic algebraic varieties},
  journal={J. Eur. Math. Soc.},
  year={1999},
  volume={1},
  pages={109-197},
}

\bib{KL}{article}{
  author={Kerr, David},
  author={Li, Hanfeng},
  title={Soficity, amenability and dynamical entropy},
  eprint={http://arxiv.org/abs/1008.1429v1}
}

\bib{LS77}{book}{
  author={Lyndon, Roger},
  author={Schupp, Paul},
  title={Combinatorial Group Theory},
  publisher={Springer--Verlag},
  address={Berlin},
  year={1977}
}

\bib{N93}{article}{
  author={Nica, Alexandru},
  title={Asymptotically free families of random unitaries in symmetric groups},
  journal={Pacific J. Math.},
  volume={157},
  year={1993},
  pages={295--310}
}

\bib{OW87}{article}{
  author={Ornstein, Donald},
  author={Weiss, Benjamin},
  title={Entropy and isomorphism theorems for actions of amenable groups},
  journal={J. Analyse Math.},
  volume={48},
  year={1987},
  pages={1--141}
}

\bib{P}{article}{
  author={Paunescu, Liviu},
  title={On sofic actions and equivalence relations},
  eprint={http://arxiv.org/abs/1002.0605v4}
}

\bib{P08}{article}{
  author={Pestov, Vladimir},
  title={Hyperlinear and sofic groups: a brief guide},
  journal={Bull. Symbolic Logic},
  volume={14},
  year={2008},
  pages={449--480}
}

\bib{PK}{article}{
  author={Pestov, Vladimir},
  author={Kwiatkowska, Aleksandra},
  title={An introduction to hyperlinear and sofic groups},
  eprint={http://arxiv.org/abs/0911.4266v2}
}

\bib{T08}{article}{
  author={Thom, Andreas},
  title={Sofic groups and diophantine approximation},
  journal={Comm. Pure Appl. Math.},
  volume={61},
  year={2008},
  pages={1155--1171}
}

\bib{T}{article}{
  author={Thom, Andreas},
  title={Examples of hyperlinear groups without factorization property},
  journal={Groups Geom. Dyn.},
  year={2010},
  volume={4},
  pages={195--208}
}

\bib{V91}{article}{
  author={Voiculescu, Dan},
  title={Limit laws for random matrices and free products},
  journal={Invent. Math.},
  volume={104},
  year={1991},
  pages={201--220}
}

\bib{W00}{article}{
  author={Weiss, Benjamin},
  title={Sofic groups and dynamical systems},
  conference={
    title={Ergodic Theory and Harmonic Analysis},
    date={Oct., 2000},
  },
  book={
    series={Sankhya:  Indian J. Stat. Ser. A}, 
    volume={62},
    publisher={Springer--Verlag},
    date={2000}
  },
  pages={350--359}
}

\bib{W01}{article}{
  author={Weiss, Benjamin},
  title={Monotileable amenable groups},
  conference={
    title={Topology, Ergodic Theory, Real Algebraic Geometry}
  },
  book={
    series={Amer. Math. Soc. Transl. Ser. 2},
    volume={202},
    date={2001},
    publisher={Amer. Math. Soc.}
  },
  pages={257--262}
}

\end{biblist}
\end{bibdiv}

\end{document}